\numberwithin{equation}{section}
\newtheorem{theorem}{Theorem}[section]
\newtheorem{lemma}[theorem]{Lemma}
\newtheorem{corollary}[theorem]{Corollary}
\newtheorem{proposition}[theorem]{Proposition}
\newtheorem{problem}[theorem]{Problem}
\theoremstyle{definition}
\newtheorem{remark}[theorem]{Remark}
\newtheorem{definition}[theorem]{Definition}
\newtheorem{example}[theorem]{Example}
\theoremstyle{remark}
\newcommand{\spn}{\mathop\mathrm{span}}
\DeclareMathOperator{\rank}{\mathrm{rank}}
\DeclareMathOperator{\tr}{\mathrm{tr}}
\DeclareMathOperator{\Imm}{\mathrm{Im}}
\newcommand{\PARENTH}[1]{ \left( #1 \right) } 
\newcommand{\INDEX}[1]{\big[{[}#1{]}\big]}
\begin{document}
	
	\title{A notion of optimal packings of subspaces with mixed-rank and solutions}

	\author[Casazza, Haas, Stueck, Tran
	]{Peter G. Casazza, John I. Haas IV, Joshua Stueck, and Tin T. Tran\\
		(In memory of John I. Haas)}

	\address{219 Mathematical Sciences Building, Department of Mathematics, University of Missouri, Columbia, MO 65211}
	\email{Casazzap@missouri.edu}
	\email{haasji@missouri.edu}
	\email{jsstueck@mail.mizzou.edu}
	\email{tttrz9@mail.missouri.edu}

	\begin{abstract} We resolve a longstanding open problem by reformulating the Grassmannian fusion frames to the case of mixed dimensions and show that this satisfies the proper properties for the problem. In order to compare elements of mixed dimension, we use a classical embedding to send all fusion frame elements to points on a higher dimensional Euclidean sphere, where they are given ``equal footing''.  Over the embedded images -- a compact subset in the higher dimensional embedded sphere -- we define optimality in terms of the corresponding restricted coding problem. We then construct infinite families of solutions to the problem by using maximal sets of mutually unbiased bases and block designs. Finally, we show that using Hadamard 3-designs in this construction leads to infinite examples of maximal orthoplectic fusion frames of constant-rank. Moreover, any such fusion frames constructed by this method must come from Hadamard 3-designs.
\end{abstract}

	\maketitle
\section{Introduction}
Let $\mathcal{X}$ be a compact metric space endowed with a distance function $d_\mathcal{X}$. The packing problem is the problem of finding a finite subset of $\mathcal{X}$ so that the minimum pairwise distance between points of this set is maximized. When $\mathcal{X}$ is the Grassmannian manifold and $d_\mathcal{X}$ is the chordal distance, the problem has received considerable attention over the last century. This has been motivated in part by emerging applications such as in quantum state tomography~\cite{RKSC, Z}, compressed sensing~\cite{BFMW} and coding theory~\cite{BBC, SH, XZG}.  Much previous work also focuses on the special case where the Grassmannian manifold contains subspaces of dimension one. This direction became an active area of research in the context of frame theory, see \cite{BK1, BoE, BH2, BH3, Et1, FMT, FJMP, FJM, HS, HP, JMF, K, O, STDH, SH}, for instance.  

In this work, we will consider the problem of optimal packings of subspaces with multiple dimensions.  We expect that this generalized notion will similarly be useful in the signal processing realm, but our emphasis is on upholding the underlying geometric tenet as a packing problem.

The paper is organized as follows. After preliminaries in Section~\ref{sec2}, we will take a look at the constant-rank packing problem in Section \ref{sec3}. In particular, after dissecting the well-known proof for the derivation of the (chordal) simplex and orthoplex bounds, we can see that solutions for this problem can be found by solving a corresponding restricted coding problem in a higher dimensional space. This motivates us to give a definition for optimal packings of subspaces with multiple dimensions in Section \ref{sec4}. Next, we present some properties of solutions of the mixed-rank packing problem in Section \ref{sec5}. In particular, as in the constant-rank case, we show that the solutions to the problem always form fusion frames. Finally, we will construct some infinite families of solutions in Section \ref{sec6}. We also focus on constructing maximal orthoplectic fusion frames of constant-rank in this section.

\section{Preliminaries}\label{sec2}
Fix $\mathbb{F}$ as $\mathbb{R}$ or $\mathbb{C}$; furthermore, prescribe $d, l, m, n \in \mathbb{N}$, where the last three numbers satisfy $l \le m \le n$. We also denote $\INDEX{m}$ for the set $\INDEX{m}:=\{1, 2, \ldots, m\}$.
A {\bf code} is a sequence of $n$ points on the real unit sphere, $\mathcal{S}^{d-1} \subset \mathbb{R}^d$.    
A {\bf packing} is a sequence of $m \times m$ orthogonal projections onto subspaces of $\mathbb{F}^m$, which we often identify with their associated subspaces; that is, we frequently identify a given packing $\mathcal{P}:=\{P_i\}_{i=1}^n$ with its subspace images, $\mathcal{P}:= \{W_i:=\Imm(P_i)\}_{i=1}^n$. 
The {\bf mixture} of a packing is the number of different ranks occurring among the packing's elements.
If a packing has a mixture of one, meaning the ranks of a packing's elements equate, then the packing is {\bf constant-rank}; otherwise, it is {\bf mixed-rank}. 
We define the {\bf Grassmannian manifold}, $\mathcal{G}(l, \mathbb{F}^m)$, as the set of all orthogonal projections over $\mathbb{F}^m$  with rank $l$.  Thus, a constant-rank packing, $\mathcal{P}$, is a sequence of points in the $n$-fold Cartesian product of Grassmannian manifolds:  
$$\mathcal{P}=\{P_i\}_{i=1}^n \in \prod_{i=1}^n \mathcal{G}(l, \mathbb{F}^m).$$
Conventionalizing the preceding notation, we reserve the symbol, $d$, for the ambient dimension of a given code,  we reserve, $m$, for the ambient dimension of a given packing, and $n$ denotes the number of points in an arbitrary code or packing.

The existence of a mixed-rank packing, $\mathcal{P}$, implicates the existence of a sequence of pairs, $\{(n_i, l_i)\}_{i=1}^{s}$, where each $n_i$ denotes the number of packing elements of rank $l_i$, meaning $n = \sum_{i=1}^s n_i$, and $1\leq l_i\leq m$, for $i\in \INDEX{s}$.  This leads to a dictionary-type ordering on the elements, which -- as in the constant-rank case -- respects the interpretation of mixed-rank packings as sequences in a Cartesian product of Grassmannian manifolds:
$$
\mathcal{P}=\{P_{j}^{(l_i)}\}_{i=1, j=1}^{s,\ \ n_i} \in \prod_{i=1}^s \prod_{j=1}^{n_i} \mathcal{G}(l_i, \mathbb{F}^m).
$$   
A {\bf fusion frame}, $\mathcal{P} = \{P_i\}_{i=1}^n$,  is a packing such that its {\bf fusion frame operator}, 
$$ F_P := \sum_{i=1}^n P_i,$$ 
is positive-definite. If the $F_P$ is a multiple of the $m \times m$ identity operator, henceforth denoted $I_m$, then $\mathcal{P}$ is {\bf tight.} A constant-rank fusion frame for which all elements are  rank-one is a {\bf frame}. For more information on (fusion) frames, we recommend \cite{CG, C, W}.

\section{Constant-rank packings revisited}\label{sec3}
In order to generalize the notion of optimal packings of subspaces with different dimensions, we will take a look at the constant-rank packing problem as discussed, for example, in \cite{BH1, CHST, CHS}.
\begin{definition}
	Given two $l$-dimensional subspaces of $\mathbb{F}^m$ with corresponding orthogonal projections $P$ and $Q$, the chordal distance between them is 
	\[d_c(P, Q):=\dfrac{1}{\sqrt{2}}\Vert P-Q\Vert_{H.S}=(l-\tr(PQ))^{1/2}.\]
\end{definition}

\begin{definition}
	Given a constant-rank packing, $\mathcal{P}=\{P_i\}_{i=1}^n \subset \mathcal{G}(l, \mathbb{F}^m)$, its {\bf (chordal) coherence} is
	$$
	\mu(\mathcal{P}): = \max_{\substack{1 \le i, j \le n \\ i \neq j}} \tr (P_i P_j).
	$$
	The constant-rank packing, $\mathcal{P}$, is {\bf optimally spread} if:
	$$
	\mu(\mathcal{P}) = 
	\min_{   
		\substack{\mathcal{P}'=\{P_i'\}_{i=1}^n\\ \mathcal{P}' \subset \mathcal{G}(l, \mathbb{F}^m)}  
	}
	\mu(\mathcal{P}').
	$$
\end{definition}
For reasons clarified below, we denote the dimension, $d_{\mathbb{F}^m}$, as a function of $m$ and $\mathbb{F}$ as follows\begin{equation}\label{eq:embdim}
d_{\mathbb{F}^m}:=\left\{ \begin{array}{cc} \frac{m(m+1)}{2}-1, & \mathbb{F} = \mathbb{R}\\ m^2-1, & \mathbb{F} = \mathbb{C}
\end{array} \right..
\end{equation}
We recall two well-known bounds for the coherence $\mu(\mathcal{P})$, the simplex and orthoplex bounds, as derived by  Conway, Hardin and Sloane~\cite{CHS} from the  Rankin bound~\cite{R}:
\begin{theorem}[\cite{CHS}]\label{thm1}
	\begin{enumerate}
		\item {\bf Simplex bound}:
		If $\mathcal{P}=\{P_i\}_{i=1}^n \subset \mathcal{G}(l, \mathbb{F}^m)$ is a constant-rank packing for $\mathbb{F}^m$,  
		then 
		$$
		\mu(\mathcal{P}) \geq \frac{nl^2-ml}{m(n-1)}.
		$$
		and equality is achieved if and only if the fusion frame is  equiangular and tight.
		\item {\bf Orthoplex bound}:
		If $\mathcal{P}=\{P_i\}_{i=1}^n \subset \mathcal{G}(l, \mathbb{F}^m)$ is a constant-rank packing for $\mathbb{F}^m$ and  $n >  d_{\mathbb{F}^m}+1$, then
		$$
		\mu(\mathcal{P}) \geq \frac{l^2}{m},
		$$
		and if equality is achieved, then $\mathcal{P}$ is optimally spread and $n\leq 2d_{\mathbb{F}^m}.$
	\end{enumerate}
	
\end{theorem}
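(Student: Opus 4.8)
The plan is to reduce both bounds to the classical Rankin bounds by sending each projection to a point on a Euclidean sphere via the standard traceless self-adjoint embedding. First I would consider the real inner-product space $V$ of self-adjoint operators on $\mathbb{F}^m$ equipped with the Hilbert--Schmidt inner product $\langle A, B\rangle := \tr(AB)$, and inside it the hyperplane $V_0$ of traceless operators; a direct dimension count (for $\mathbb{F}=\mathbb{R}$ the symmetric matrices have real dimension $\tfrac{m(m+1)}{2}$, for $\mathbb{F}=\mathbb{C}$ the Hermitian matrices have real dimension $m^2$, less one for the trace constraint in each case) gives $\dim_{\mathbb{R}} V_0 = d_{\mathbb{F}^m}$, which is precisely why $d_{\mathbb{F}^m}$ is defined as in \eqref{eq:embdim}. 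I would then define $\Phi(P) := P - \tfrac{l}{m}I_m \in V_0$ and record the two identities $\|\Phi(P)\|^2 = l - \tfrac{l^2}{m}$ and $\langle \Phi(P), \Phi(Q)\rangle = \tr(PQ) - \tfrac{l^2}{m}$, obtained by expanding the Hilbert--Schmidt products and using $\tr(P)=\tr(Q)=l$ together with $P^2=P$. Thus the points $\Phi(P_1),\dots,\Phi(P_n)$ all lie on the sphere of radius $\sqrt{l-l^2/m}$ in the $d_{\mathbb{F}^m}$-dimensional space $V_0$, and minimizing $\mu(\mathcal{P})$ becomes equivalent to maximizing the minimal pairwise Euclidean distance of this embedded code.

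For the simplex bound I would run the standard averaging argument. Since $\big\|\sum_{i=1}^n \Phi(P_i)\big\|^2 \ge 0$, expanding yields $\sum_{i\neq j}\langle \Phi(P_i),\Phi(P_j)\rangle \ge -n\,\|\Phi(P_1)\|^2 = -n(l - \tfrac{l^2}{m})$, so the average of the $n(n-1)$ off-diagonal inner products is at least $-\tfrac{l-l^2/m}{n-1}$. Because $\mu(\mathcal{P}) - \tfrac{l^2}{m}$ is the maximum of these off-diagonal quantities, it dominates their average, and rearranging gives $\mu(\mathcal{P}) \ge \tfrac{nl^2 - ml}{m(n-1)}$. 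Equality forces every off-diagonal inner product to equal the average, i.e. $\tr(P_iP_j)$ is constant (equiangularity), and forces $\sum_i \Phi(P_i)=0$, i.e. $\sum_i P_i = \tfrac{nl}{m}I_m$, which is exactly tightness; conversely, these two conditions return the averaging inequality to an equality.

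For the orthoplex bound I would invoke two elementary facts about vectors in $\mathbb{R}^{D}$ with $D = d_{\mathbb{F}^m}$: (i) at most $D+1$ of them can have pairwise strictly negative inner products, and (ii) at most $2D$ of them can have pairwise non-positive inner products. Since the embedded points lie in $V_0$ of dimension $D$, the hypothesis $n > D+1$ together with (i) forces some pair with $\langle \Phi(P_i),\Phi(P_j)\rangle \ge 0$, that is $\mu(\mathcal{P}) \ge \tfrac{l^2}{m}$. If equality holds, then every off-diagonal inner product is non-positive while the maximum equals $0$; optimality is then immediate since $\tfrac{l^2}{m}$ is a universal lower bound in this regime, and applying (ii) to the embedded points yields $n \le 2D = 2d_{\mathbb{F}^m}$.

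The routine parts are the two inner-product identities and the averaging inequality; the steps demanding the most care are the dimension count certifying $\dim_{\mathbb{R}} V_0 = d_{\mathbb{F}^m}$ (so that the Rankin thresholds $D+1$ and $2D$ transfer verbatim to the projection setting) and the extremal facts (i)--(ii). I expect the sharp cardinality bound $n \le 2d_{\mathbb{F}^m}$ to be the main obstacle, since it is where the orthoplex geometry genuinely enters; I would establish it either by a short induction on $D$ or by a rank-and-sign argument on the Gram matrix of the vectors with pairwise non-positive inner products.
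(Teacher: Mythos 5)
Your proposal is correct and follows essentially the same route as the paper: the traceless embedding $P \mapsto P - \tfrac{l}{m}I_m$ into the $d_{\mathbb{F}^m}$-dimensional space of traceless self-adjoint operators, the inner-product identity $\langle \Phi(P),\Phi(Q)\rangle = \tr(PQ) - \tfrac{l^2}{m}$, and the Rankin spherical-code bounds. The only cosmetic difference is that you re-derive Rankin's simplex and orthoplex inequalities (the averaging argument and facts (i)--(ii)) inline, whereas the paper simply cites them as Theorem~\ref{thm2}; your identification of the $n \le 2d_{\mathbb{F}^m}$ cardinality bound as the one genuinely nontrivial ingredient is accurate, and the Gram-matrix or inductive arguments you sketch for it are standard and would close that step.
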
  

Among constant-rank packings, it seems the most commonly known solutions to this problem arise as equiangular tight fusion frames (ETFFs)~\cite{CHRSS, DHST, King1, KPCL}, including the special case where all projections of packings are of rank one as mentioned. ETFFs are characterized by several special properties, including (i) equiangular, meaning constant modulus of the pair-wise inner products between elements, (ii) tightness, and, of course, (iii) coherence minimized to the  simplex bounds in the previous theorem. When the cardinality of a constant-rank packing is sufficiently high, the orthoplex bound characterizes several other infinite families of optimally spread constant-rank solutions, including maximal sets of mutually unbiased bases~\cite{WF} among others~\cite{BH2, BH3, BH1, CHRSS, CHST, SL}.  Although unnecessary for solutions characterized by the orthoplex bound (see \cite{CH} for a discussion of this phenomenon), many solutions arising from this bound are also tight. 

One way to prove Theorem \ref{thm1} is using a classical embedding to send all fusion frame elements to points on a higher dimensional Euclidean sphere. The desired claims then follow from a result of Rankin:
\begin{theorem}[\cite{CHS}, \cite{R}]\label{thm2}
	Let $d$ be a positive integer and $\{v_1, v_2, \ldots, v_n\}$ be $n$ vectors on the unit sphere in $\mathbb{R}^d$. Then 
	\[\min_{{i\not= j}}\Vert v_i-v_j\Vert\leq \sqrt{\dfrac{2n}{n-1}},\] and if equality is achieved, then $n\leq d+1$ and the vectors form a simplex. Additionally, if $n>d+1$, then the minimum Euclidean distance improves to:
	\[\min_{{i\not= j}}\Vert v_i-v_j\Vert\leq \sqrt{2},\] and if equality holds in this case, then $n\leq 2d$. Moreover, if $n=2d$, then equality holds if and only if the vectors forms an orthoplex, the union of an orthonormal basis with the negatives of its basis vectors.
\end{theorem}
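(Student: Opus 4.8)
The plan is to convert every statement about distances into a statement about inner products and then reduce to two combinatorial facts about vectors with non-positive pairwise inner products. Since each $v_i$ is a unit vector, $\|v_i - v_j\|^2 = 2 - 2\langle v_i, v_j\rangle$, so minimizing the pairwise distance is exactly maximizing $\rho := \max_{i \ne j}\langle v_i, v_j\rangle$, and $\min_{i\ne j}\|v_i - v_j\|^2 = 2 - 2\rho$. Every claim thus becomes a claim about how large $\rho$ is forced to be.

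For the first (simplex) bound I would expand $0 \le \|\sum_{i=1}^n v_i\|^2 = n + \sum_{i \ne j}\langle v_i, v_j\rangle \le n + n(n-1)\rho$, which forces $\rho \ge -\tfrac{1}{n-1}$ and hence $\min_{i \ne j}\|v_i - v_j\|^2 \le \tfrac{2n}{n-1}$. Equality requires simultaneously $\sum_i v_i = 0$ and $\langle v_i, v_j\rangle = -\tfrac{1}{n-1}$ for all $i \ne j$; the resulting Gram matrix $\tfrac{n}{n-1}I_n - \tfrac{1}{n-1}J$ (with $J$ the all-ones matrix) has rank $n-1$, so the $v_i$ span an $(n-1)$-dimensional space, giving $n \le d+1$, and equal inner products together with unit norm is precisely a regular simplex.

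For the improved $\sqrt{2}$ bound (when $n > d+1$) I would prove Lemma A: any $n$ nonzero vectors in $\mathbb{R}^d$ with pairwise strictly negative inner products satisfy $n \le d+1$. This goes by induction on $d$: project $v_1,\dots,v_{n-1}$ onto $v_n^\perp$, where the projections $w_i = v_i - \langle v_i, v_n\rangle v_n$ are nonzero and retain strictly negative pairwise products, since $\langle w_i, w_j\rangle = \langle v_i, v_j\rangle - \langle v_i,v_n\rangle\langle v_j,v_n\rangle < 0$. If $n > d+1$, Lemma A forbids all products being negative, so $\rho \ge 0$ and $\min_{i \ne j}\|v_i - v_j\| \le \sqrt{2}$; moreover equality here forces every $\langle v_i, v_j\rangle \le 0$. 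Then I would prove Lemma B: at most $2d$ unit vectors in $\mathbb{R}^d$ have pairwise non-positive inner products. Pick a generic unit $u$ with $\langle u, v_i\rangle \ne 0$ for all $i$, split the indices into $I_+$ and $I_-$ by the sign of $\langle u, v_i\rangle$, and show each group is linearly independent: any dependence $\sum_{i \in I_+} c_i v_i = 0$ splits into its positive- and negative-coefficient parts, whose cross inner product is $\le 0$ yet equals a squared norm $\ge 0$, hence vanishes, and pairing with $u$ then forces all $c_i = 0$. Thus $|I_+|, |I_-| \le d$ and $n \le 2d$.

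The remaining and hardest part is the characterization of equality at $n = 2d$ as the orthoplex. The route I would take is an antipodal reduction: if some $\langle v_i, v_j\rangle = -1$ then $v_j = -v_i$ by the Cauchy--Schwarz equality case, and every other $v_k$ is orthogonal to $v_i$ (because $\langle v_k, v_i\rangle \le 0$ while $\langle v_k, v_j\rangle = -\langle v_k, v_i\rangle \le 0$ force $\langle v_k, v_i\rangle = 0$), so the remaining $2d-2$ vectors lie in $v_i^\perp \cong \mathbb{R}^{d-1}$ and the claim follows by induction, peeling off $d$ mutually orthogonal antipodal pairs. The main obstacle is precisely to justify that this reduction is available, i.e.\ that $n = 2d$ cannot occur \emph{without} an antipodal pair (equivalently, that every configuration with all inner products strictly greater than $-1$ satisfies $n \le 2d-1$); this is what both enforces the reduction and pins the extremal configuration to the orthoplex. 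I would attack it through a finer analysis of the $d$-dimensional kernel of the rank-$d$ Gram matrix, applying the same positive/negative-split trick to show that every minimal linear dependence among the $v_i$ partitions its support into two mutually orthogonal positively-weighted subsets each summing to zero, and then using a dimension count to exclude reaching $2d$ when no two vectors are antipodal.
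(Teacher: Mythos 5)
First, note that the paper does not prove Theorem~\ref{thm2} at all: it is quoted from Conway--Hardin--Sloane and Rankin, so there is no internal proof to compare yours against and your attempt must stand on its own. Its first two parts do stand. The reduction to $\rho:=\max_{i\ne j}\langle v_i,v_j\rangle$, the expansion of $\bigl\|\sum_i v_i\bigr\|^2$ for the simplex bound, the rank computation of the Gram matrix $\tfrac{n}{n-1}I_n-\tfrac{1}{n-1}J$ in the equality case, Lemma~A via projection onto $v_n^{\perp}$ (add the one-line check that $w_i\ne 0$: if $v_i$ were a negative multiple of $v_n$, any third vector $v_j$ would satisfy $\langle v_i,v_j\rangle>0$), and Lemma~B via the positive/negative splitting of a dependence are all correct and essentially complete. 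One point worth making explicit in the simplex equality case: $\rho=-\tfrac1{n-1}$ alone gives $0\le\bigl\|\sum_i v_i\bigr\|^2\le n-n(n-1)\cdot\tfrac1{n-1}=0$, so the two conditions you list ($\sum_i v_i=0$ and all inner products equal to $-\tfrac1{n-1}$) are forced consequences, not additional hypotheses to be assumed.

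The genuine gap is exactly where you flag it: the equality characterization at $n=2d$. You reduce it to the claim that $2d$ unit vectors with pairwise nonpositive inner products must contain an antipodal pair, but you only sketch an attack on that claim, so the induction you propose cannot start and the last clause of the theorem remains unproven. A clean way to close it, consistent with your machinery: form the graph on $\INDEX{n}$ with an edge $\{i,j\}$ whenever $\langle v_i,v_j\rangle<0$; vectors in distinct connected components are orthogonal, so the components span mutually orthogonal subspaces. Your splitting trick shows that any positive dependence $\sum_{i\in A}a_iv_i=0$ ($a_i>0$) forces every $v_j$ with $j\notin A$ to be orthogonal to all of $A$, so $A$ is a union of components; within a single component this pins the kernel of the Gram matrix to dimension at most one, whence a component with $k$ vectors spanning an $e$-dimensional subspace has $k\le e+1$. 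Summing over components gives $n\le\sum_c(e_c+1)\le d+d=2d$, and equality forces every $e_c=1$ and $k_c=2$, i.e.\ $d$ mutually orthogonal antipodal pairs --- the orthoplex. This replaces the ``finer analysis of the kernel'' you gesture at with a finished argument; you should also record the (trivial) converse that the orthoplex attains $\min_{i\ne j}\|v_i-v_j\|=\sqrt2$.
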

In terms of the inner products between $n$ unit vectors in $\mathbb R^d$, the Rankin bound is $\max_{i\not=j}\langle v_i, v_j\rangle\geq -\frac{1}{n-1}$, and if $n>d+1$, then $\max_{i\not=j}\langle v_i, v_j\rangle\geq 0$.

To see Theorem \ref{thm2} implies Theorem \ref{thm1}, recall the well-known, isometric embedding, which maps constant-rank projections to points on a real, higher dimensional sphere, see \cite{BH1, CHST, CHS}.  Given any $m \times m$ rank $l$ orthogonal projection, $P$, the {\bf $l$-traceless map}, $T_l$, is defined  and denoted as: 
$$   
T_l(P)= P - \frac l m I_m.
$$
As proven in \cite{BH1, CHS}, $T_l$ isometrically injects $P$ into the $d_{\mathbb{F}^m}$-dimensional subspace of the $m \times m$ symmetric/hermitian matrices, $$\mathbb{H} = \{ A \in \mathbb{F}^{m \times m}: \tr(A)=0\},$$ endowed with the standard Hilbert-Schmidt inner product.  Dimension counting yields
the value of $d_{\mathbb{F}^m}$ as function of $m$, as described in Equation~\ref{eq:embdim}. With an appropriate choice of isomorphism, which we record as $\mathcal{V}$, we interpret the inner product between its elements as the standard Euclidean inner product between vectors in $\mathbb{R}^{d_{\mathbb{F}^m}}$. 

Denote by $\Omega_l$ the image of $\mathcal{G}(l, \mathbb{F}^m)$ under the $l$-traceless map, $T_l$ as:
$$
\Omega_l := T_l\PARENTH{\mathcal{G}\PARENTH{l, \mathbb{F}^m} } \subset \mathbb{H}.
$$

It is clear that the image $\Omega_l$  lies on the sphere in $\mathbb{R}^{d_{\mathbb{F}^m}}$, with squared radius 
\begin{equation}\label{eq:radii}
r_l^2=\tr\PARENTH{ \PARENTH{ P-\frac{l}{m}I_m}^2 } 
= \frac{l(m -l)}{m}.\end{equation}

Normalizing the image, $\Omega_l$, to lie on the surface of the unit sphere 
$$
\mathcal{K}_l : = \left\{ \frac{ \mathcal{V}(A) }{r_l}: A \in \Omega_l \right\} \subset \mathcal{S}^{d_{\mathbb{F}^m}-1} \subset \mathbb{R}^{d_{\mathbb{F}^m}},
$$
thereby converting any constant-rank packing into a code. Because $\mathcal{V}$ and the $l$-traceless map are all continuous actions, and because the Grassmannian manifold is well-known to be compact, elementary topological theory implies that $\mathcal{K}_l$ is compact.

For any orthogonal projection $P\in \mathcal{G}(l, \mathbb{F}^m)$, set $v_P=\dfrac{\mathcal{V}(T_l(P))}{r_l}$, then $v\in \mathcal{K}_l.$ We will say that $v_P$ is the {\bf embedded vector} corresponding to $P$. A simple computation yields the identity: 
\begin{equation}\label{eq:const}
\tr \PARENTH{PQ} = \frac{l^2}{m} + \frac{l(m-l)}{m}\langle v_P, v_Q\rangle, 
\end{equation}
for any $P, Q\in \mathcal{G}(l, \mathbb{F}^m)$, where $v_P$ and $v_Q$ are the corresponding embedded vectors of $P$ and $Q$, respectively. Theorem \ref{thm1} then follows by using Equation \ref{eq:const} and Rankin's result.

Thus, the optimality of the packing $\mathcal{P}$ has a close connection with the restricted coding problem. More precisely, for any compact set $\mathcal{K}$ in the unit sphere in $\mathbb{R}^d$, we can consider the following restricted coding problem: 
\begin{problem}
	A code $\mathcal{C}=\{v_i\}_{i=1}^n\subset \mathcal{K}$ is said to be a solution to the restricted coding problem respective to $\mathcal{K}$ if it satisfies:
	$$
	\max_{i \neq j} \langle v_i, v_j \rangle = \min_{ \{u_i\}_{i=1}^n\subset \mathcal{K} } \max_{i \neq j} \langle u_i, u_j \rangle.
	$$
\end{problem}

By what we have discussed, the following proposition is obvious.

\begin{proposition}\label{pro1}
	A packing $\mathcal{P}=\{P_i\}_{i=1}^n\subset \mathcal{G}(l, \mathbb{F}^m)$ is optimally spread if and only if the corresponding embedded unit vectors $\{v_i\}_{i=1}^n \subset \mathbb{R}^{d_{\mathbb{F}^m}}$ are a solution to the restricted coding problem respective to $\mathcal{K}_l$.
\end{proposition}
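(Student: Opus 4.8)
The plan is to show that the bijection $P \mapsto v_P$ between $\mathcal{G}(l,\mathbb{F}^m)$ and $\mathcal{K}_l$ transforms the optimization defining optimal spreading into the optimization defining a solution of the restricted coding problem, with the two objective functions related by an increasing affine change of variables.

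First I would record that the assignment $P \mapsto v_P = \mathcal{V}(T_l(P))/r_l$ is a bijection from $\mathcal{G}(l,\mathbb{F}^m)$ onto $\mathcal{K}_l$: the $l$-traceless map $T_l$ is injective, since it merely subtracts the fixed scalar matrix $\frac{l}{m}I_m$; the map $\mathcal{V}$ is an isomorphism; and dividing by the fixed radius $r_l$ is a bijection onto its image. Hence every code $\{u_i\}_{i=1}^n \subset \mathcal{K}_l$ arises as the embedded code of a unique packing $\{P_i'\}_{i=1}^n \subset \mathcal{G}(l,\mathbb{F}^m)$, and conversely. Here we use $l < m$, so that $r_l^2 = l(m-l)/m > 0$; the case $l = m$ is degenerate, as $\mathcal{G}(m, \mathbb{F}^m) = \{I_m\}$.

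Next I would invoke the key identity, Equation~\ref{eq:const}: for any pair $P, Q \in \mathcal{G}(l, \mathbb{F}^m)$ with embedded vectors $v_P, v_Q$,
$$\tr(PQ) = \frac{l^2}{m} + \frac{l(m-l)}{m}\langle v_P, v_Q\rangle.$$
Since the coefficient $l(m-l)/m$ is strictly positive, $\tr(PQ)$ is a strictly increasing affine function of $\langle v_P, v_Q\rangle$. Taking the maximum over distinct indices commutes with this monotone affine map, so for any packing $\mathcal{P} = \{P_i\}_{i=1}^n$ with embedded code $\{v_i\}_{i=1}^n$ we obtain
$$\mu(\mathcal{P}) = \max_{i\neq j}\tr(P_iP_j) = \frac{l^2}{m} + \frac{l(m-l)}{m}\max_{i\neq j}\langle v_i, v_j\rangle.$$

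Finally, combining the two preceding observations, minimizing $\mu(\mathcal{P}')$ over all $\mathcal{P}' \subset \mathcal{G}(l,\mathbb{F}^m)$ is equivalent, via the same increasing affine map, to minimizing $\max_{i\neq j}\langle u_i, u_j\rangle$ over all codes $\{u_i\}_{i=1}^n \subset \mathcal{K}_l$, and the bijection guarantees that these two ranges of minimization correspond exactly. Thus $\mathcal{P}$ attains the minimum defining optimal spreading precisely when its embedded code attains the minimum defining a solution of the restricted coding problem, which is the assertion. I do not anticipate a genuine obstacle here; the only point requiring care is the strict positivity of the scaling coefficient (equivalently $l < m$), which guarantees the change of variables is monotone, so that the two optimization problems are genuinely equivalent rather than reversed or collapsed.
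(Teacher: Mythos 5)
Your argument is correct and is precisely the reasoning the paper has in mind: the paper declares the proposition ``obvious'' from the preceding discussion, namely the bijective embedding onto $\mathcal{K}_l$ and the increasing affine identity in Equation~\ref{eq:const} relating $\tr(PQ)$ to $\langle v_P, v_Q\rangle$. You have simply written out that intended argument in full, including the minor but legitimate caveat that $l<m$ is needed for $r_l>0$.
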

\section{Generalized to the mixed-rank packing problem}\label{sec4}

In the previous section, we have seen that solutions for constant-rank packings can be found by solving the corresponding restricted coding problem. We will use this idea to define a notion of optimal packings of subspaces with various dimensions.

Given a sequence of pairs $\{(n_i, l_i)\}_{i=1}^{s}$. By the previous section, for each $i$, $\mathcal{K}_{l_i}:=\dfrac{1}{r_{l_i}}\mathcal{V}(T_{l_i}(\mathcal G\PARENTH{l_i, \mathbb F^m}))$ is a compact set in the unit sphere of $\mathbb{R}^{d_{\mathbb{F}^m}}$. Note that  $d_{\mathbb{F}^m}$ is independent of  $l_i$. Moreover, they are disjoint sets.
\begin{proposition}
	For any  $1 \le l_i, l_j \leq m$, $i \neq j$, we have
	$$\mathcal{K}_{l_i} \cap \mathcal{K}_{l_j} = \emptyset.$$
\end{proposition}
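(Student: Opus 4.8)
The plan is to reduce the disjointness of these two subsets of the sphere to a statement about the underlying traceless matrices, and then to separate the two rank classes by a spectral invariant that reads off the rank. Since $\mathcal{V}$ is a linear isomorphism, hence injective, a point lies in $\mathcal{K}_{l_i} \cap \mathcal{K}_{l_j}$ if and only if there exist projections $P \in \mathcal{G}(l_i, \mathbb{F}^m)$ and $Q \in \mathcal{G}(l_j, \mathbb{F}^m)$ with
\[
\frac{T_{l_i}(P)}{r_{l_i}} = \frac{T_{l_j}(Q)}{r_{l_j}}
\]
as elements of $\mathbb{H}$. So it suffices to show this equality of symmetric/Hermitian matrices is impossible whenever $l_i \neq l_j$; recall that for $i \neq j$ the ranks in the sequence $\{(n_i, l_i)\}$ are distinct, so the hypothesis indeed gives $l_i \neq l_j$. (Here I work in the range $1 \le l_i, l_j \le m-1$, which is exactly where $r_{l_i}, r_{l_j} > 0$ and the normalization defining $\mathcal{K}_{l_i}, \mathcal{K}_{l_j}$ is legitimate.)

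The key step is a short eigenvalue computation. For a rank-$l$ projection $P$, the matrix $T_l(P) = P - \frac{l}{m} I_m$ is diagonalized in the same eigenbasis as $P$: its eigenvalues are $\frac{m-l}{m} > 0$ with multiplicity $l$ (on $\Imm(P)$) and $-\frac{l}{m} < 0$ with multiplicity $m-l$ (on $\ker P$). Dividing by $r_l > 0$ preserves signs, so every matrix of the form $T_l(P)/r_l$ has exactly $l$ strictly positive eigenvalues and $m - l$ strictly negative ones, with no zero eigenvalue. In other words, the number of positive eigenvalues --- a quantity intrinsic to a symmetric/Hermitian matrix --- equals the rank $l$ for every representative of $\mathcal{K}_l$.

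With this invariant in hand the conclusion is immediate: if the displayed equality held, the two sides would be the same matrix and hence would have the same number of positive eigenvalues, forcing $l_i = l_j$ and contradicting $l_i \neq l_j$. Therefore no such common point exists and $\mathcal{K}_{l_i} \cap \mathcal{K}_{l_j} = \emptyset$.

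I do not expect a genuine obstacle here; the only points requiring care are bookkeeping rather than difficulty. First, one must confirm the normalization is legitimate, i.e.\ that $r_{l_i}, r_{l_j} \neq 0$, which excludes $l \in \{0, m\}$. Second, one should verify that the separating quantity is truly a function of the point $v \in \mathcal{K}_l$ and not of the chosen representative; this holds because $\mathcal{V}^{-1}$ is a well-defined linear map and the positive-inertia index depends only on the matrix $\mathcal{V}^{-1}(v)$. A slightly different route --- writing $P = (r_{l_i}/r_{l_j})\,Q + c\,I_m$ from the displayed identity and checking that the eigenvalues $\{0,1\}$ of a projection cannot be matched unless $l_i = l_j$ --- yields the same conclusion, but the inertia argument is cleaner and sidesteps the case analysis.
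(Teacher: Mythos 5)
Your proof is correct and follows essentially the same route as the paper: reduce via injectivity of $\mathcal{V}$ to an equality of the normalized traceless matrices, then compare their spectra. The paper lists the eigenvalues and multiplicities and declares a contradiction with $l_i \neq l_j$; you merely make the separating invariant explicit (the number of strictly positive eigenvalues equals the rank), which is a slightly cleaner phrasing of the identical argument.
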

\begin{proof}
	Suppose by way of contradiction that $\mathcal{K}_{l_i} \cap \mathcal{K}_{l_{j}} \not= \emptyset.$ Then there exists $v\in \mathcal{K}_{l_i} \cap \mathcal{K}_{l_j}$. We can write $v$ in two ways as follows:
	$$v=\dfrac{1}{r_{l_i}}\mathcal{V}\left(P-\dfrac{l_i}{m}I_m\right)=\dfrac{1}{r_{l_j}}\mathcal{V}\left(Q-\dfrac{l_{j}}{m}I_m\right)$$
	for some $P\in \mathcal{G}\PARENTH{l_i, \mathbb{F}^m}$ and $Q\in \mathcal{G}\PARENTH{l_j, \mathbb{F}^m}$. 
	
	Hence, $$\dfrac{1}{r_{l_i}}\left(P-\dfrac{l_i}{m}I_m\right)=\dfrac{1}{r_{l_j}}\left(Q-\dfrac{l_{j}}{m}I_m\right).$$
	Since $P$ has eigenvalues $1$ and $0$ with corresponding multiplicities $l_i$ and $m-l_i$. It follows that  $\dfrac{1}{r_{l_i}}\left(P-\dfrac{l_i}{m}I_m\right)$ has eigenvalues $\dfrac{1}{r_{l_i}}\left(1-\dfrac{l_i}{m}\right)$ and $-\dfrac{l_i}{mr_{l_i}}$ with multiplicities $l_i$ and $m-l_i$, respectively. Likewise, $\dfrac{1}{r_{l_j}}\left(Q-\dfrac{l_j}{m}I_m\right)$ has eigenvalues  $\dfrac{1}{r_{l_j}}\left(1-\dfrac{l_j}{m}\right)$ and $-\dfrac{l_j}{mr_{l_j}}$ with respective multiplicities $l_j$ and $m-l_j$. This contradicts the fact that $l_i\not=l_j$.
\end{proof}
Similar to \eqref{eq:const}, the following identity gives a relation between the  Hilbert-Schmidt inner products of orthogonal projections and the inner products of their embedded vectors.
\begin{proposition}
	Let $P$ and $Q$ be orthogonal projections of rank $l_P$ and $l_Q$, respectively. Let $v_P$ and $v_Q$ be the corresponding embedded vectors. Then 
	\begin{equation}\label{eqmixed}
	\left\langle v_P, v_Q\right\rangle=\sqrt{\dfrac{m^2}{l_Pl_Q(m-l_P)(m-l_Q)}}\left(\tr(PQ)-\dfrac{l_Pl_Q}{m}\right).
	\end{equation}
\end{proposition}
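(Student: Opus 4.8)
The plan is to unwind the definitions of the two embedded vectors and thereby reduce the entire statement to a single Hilbert--Schmidt trace computation. By definition, $v_P = \mathcal{V}(T_{l_P}(P))/r_{l_P}$ and $v_Q = \mathcal{V}(T_{l_Q}(Q))/r_{l_Q}$, where $T_{l_P}(P) = P - \tfrac{l_P}{m}I_m$ and $T_{l_Q}(Q) = Q - \tfrac{l_Q}{m}I_m$, and where the squared radii are supplied by \eqref{eq:radii} as $r_{l_P}^2 = l_P(m-l_P)/m$ and $r_{l_Q}^2 = l_Q(m-l_Q)/m$. Since $\mathcal{V}$ was chosen precisely so as to carry the Hilbert--Schmidt inner product on $\mathbb{H}$ to the standard Euclidean inner product on $\mathbb{R}^{d_{\mathbb{F}^m}}$, my first step would be to invoke this isometry to write
$$\langle v_P, v_Q \rangle = \frac{1}{r_{l_P} r_{l_Q}} \tr\left( \left(P - \tfrac{l_P}{m}I_m\right)\left(Q - \tfrac{l_Q}{m}I_m\right) \right).$$

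The second step is to expand this trace. Using linearity of the trace together with $\tr(P) = l_P$ and $\tr(Q) = l_Q$ (each being the rank of an orthogonal projection) and $\tr(I_m) = m$, I expect the three lower-order terms to collapse: the two linear terms each contribute $-l_P l_Q/m$ while the constant term contributes $+l_P l_Q/m$, so that
$$\tr\left( \left(P - \tfrac{l_P}{m}I_m\right)\left(Q - \tfrac{l_Q}{m}I_m\right) \right) = \tr(PQ) - \frac{l_P l_Q}{m}.$$

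Finally, I would substitute the radii back in. Because $r_{l_P} r_{l_Q} = \sqrt{l_P l_Q (m-l_P)(m-l_Q)/m^2}$, its reciprocal is exactly the square-root prefactor appearing in \eqref{eqmixed}, and combining the two displays yields the asserted identity. This is fundamentally a bookkeeping computation rather than a result requiring a genuine idea, so the ``hard part'' is merely to correctly invoke the isometry property of $\mathcal{V}$ and to keep careful track of the two distinct normalization constants $r_{l_P}$ and $r_{l_Q}$, which no longer coincide as they did in the constant-rank setting. As a consistency check, specializing to $l_P = l_Q = l$ makes the prefactor equal to $m/(l(m-l))$ and recovers Equation \eqref{eq:const}.
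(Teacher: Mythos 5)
Your computation is correct and is exactly the argument the paper intends: the paper states this proposition without proof, presenting it as the same "simple computation" used to derive Equation \eqref{eq:const}, namely invoking the isometry of $\mathcal{V}$, expanding the Hilbert--Schmidt trace of the two traceless shifts, and dividing by $r_{l_P} r_{l_Q}$. Your consistency check against the constant-rank case confirms the bookkeeping.
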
  

For any mixed-rank packing $\mathcal{P}=\{P^{(l_i)}_j\}_{i=1, j=1}^{s,\ \  n_i} \in \prod_{i=1}^s \prod_{j=1}^{n_i}    \mathcal G(l_i, \mathbb F^m)$, if $v_j^{(l_i)}$ is the embedded vector corresponding to $P^{(l_i)}_j$, then  $\mathcal{C}=\{v_j^{(l_i)}\}_{i=1, j=1}^{s, \ \ n_i}$ is a code in $\mathbb{R}^{d_{\mathbb{F}^m}}$. Note that for each $i=1, 2, \ldots, s$, $\{v_j^{(l_i)}\}_{j=1}^{n_i}$ is a sequence of unit vectors lying on the compact set $\mathcal{K}_{l_i}$ of the unit sphere. In other words,
$\{v_j^{(l_i)}\}_{i=1, j=1}^{s,\ \ n_i}\in \prod_{i=1}^s \prod_{j=1}^{n_i}    \mathcal{K}_{l_i}.$

Motivated by Proposition \ref{pro1}, we will now give a definition for optimally spread of mixed-rank packings: 
\begin{definition}\label{defn1}
	Let $\mathcal{P}=\{P^{(l_i)}_j\}_{i=1, j=1}^{s,\ \  n_i}$ be a mixed-rank packing in $\mathbb{F}^m$.  $\mathcal{P}$ is said to be {\bf optimally spread} if the corresponding embedded vectors $\{v_j^{(l_i)}\}_{i=1, j=1}^{s,\ \ n_i}$ satisfy
	$$	\max_{(j, l_i) \neq (j', l_{i'})}  \langle v_j^{(l_i)}, v_{j'}^{(l_{i'})} \rangle
	= \min_{\{u_j^{(l_i)}\}_{i=1, j=1}^{s,\ \ n_i}\in \prod_{i=1}^s \prod_{j=1}^{n_i} \mathcal{K}_{l_i}}\max_{(j, l_i) \neq (j', l_{i'})}  \langle u_j^{(l_i)}, u_{j'}^{(l_{i'})} \rangle.$$
	In this case, the value $\mu:=\max_{(j, l_i) \neq (j', l_{i'})}  \langle v_j^{(l_i)}, v_{j'}^{(l_{i'})} \rangle
	$ is called the {\bf packing constant} for $\mathcal{P}$.
	
	We also say that their associated subspaces $\mathcal{W}=\{W^{(l_i)}_j\}_{i=1, j=1}^{s,\ \  n_i}$ are an optimally spread mixed-rank packing.
\end{definition}
\begin{remark}
	This definition is well-posed because each $\mathcal{K}_{l_i}$ is compact and  the objective functions are continuous.
\end{remark}

\section{Properties of optimally spread mixed-rank packings}\label{sec5}
This section is dedicated to presenting some properties of solutions of the optimal mixed-rank packing problem. In particular, as in the constant-rank case \cite{CHST}, we will see that the solutions of the mixed-rank packing problem are always fusion frames.

Suppose $\mathcal{P}=\{P^{(l_i)}_j\}_{i=1, j=1}^{s,\ \  n_i}$ is an optimally spread mixed-rank packing in $\mathbb{F}^m$. Let $\mathcal{W}=\{W^{(l_i)}_j\}_{i=1, j=1}^{s,\ \  n_i}$ be their associated subspaces. To simplify notation, in the following theorems, we will enumerate  $\mathcal{P}$ and $\mathcal{W}$ as $\mathcal{P}=\{P^{(l_i)}_i\}_{i=1}^n$ and $\mathcal{W}=\{W^{(l_i)}_i\}_{i=1}^n,$ where $n=\sum_{i=1}^{s}n_i$ is the number of subspaces of $\mathcal{W}$, $\dim W_i=l_i, 1\leq l_i\leq m.$ Note that $l_i's$ here need not to be distinct as before.

Let $\mu$ be the packing constant of an optimally spread mixed-rank packing $\mathcal{W}=\{W^{(l_i)}_i\}_{i=1}^n.$ We say that an element $W^{(l_j)}_j\in \mathcal{W}$ {\bf achieves the packing constant} if there exists some $i\in \INDEX{n}$ such that the inner product between the corresponding embedded vectors of $W^{(l_j)}_j$ and $W^{(l_i)}_i$ equals $\mu$. We call  each element $W^{(l_i)}_i$ satisfying this condition a {\bf packing neighbor} of $W^{(l_j)}_j$. 
\begin{theorem}\label{thm3}
	Let $\mathcal{W}=\{W_i^{(l_i)}\}_{i=1}^{n}, n\geq m$ be an optimally spread mixed-rank packing. Denote
	\[\mathcal{I}:=\{i:  W^{(l_i)}_i \mbox{ achieves the packing constant}\} .\]
	Then
	\[\spn\{W_i^{(l_i)}: i\in \mathcal{I}\}=\mathbb{F}^m.\]
\end{theorem}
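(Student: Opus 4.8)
The plan is to argue by contradiction through a rank-preserving perturbation. Suppose $V:=\spn\{W_i^{(l_i)}:i\in\mathcal I\}\neq\mathbb F^m$ and fix a unit vector $x\in V^{\perp}$, so that $P_i x=0$ for every $i\in\mathcal I$. First I would record the combinatorial structure of the contact pairs: if the embedded vectors of $W_i$ and $W_j$ have inner product $\mu$, then each of $W_i,W_j$ is a packing neighbor of the other, so both indices lie in $\mathcal I$; consequently every pair that attains $\mu$ is an active--active pair, while every other pair has inner product strictly below $\mu$. Since there are finitely many subspaces, these non-attaining pairs share a uniform gap below $\mu$. By the identity \eqref{eqmixed}, decreasing $\langle v_i,v_j\rangle$ is the same as decreasing $\tr(P_iP_j)$, since the rank-dependent prefactor is fixed under rotations. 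Hence it suffices to exhibit a continuous deformation of $\{W_i^{(l_i)}\}_{i\in\mathcal I}$ inside $\prod_{i\in\mathcal I}\mathcal G(l_i,\mathbb F^m)$, leaving the inactive subspaces fixed, that strictly lowers $\tr(P_iP_j)$ for every contact pair; for a small deformation parameter the non-attaining pairs stay below $\mu$, so the packing constant drops, contradicting optimality.

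For the deformation I would tilt each active subspace toward $x$: set $P_i(t)=e^{tA_i}P_ie^{-tA_i}$ with the skew-Hermitian generator $A_i=xw_i^{*}-w_ix^{*}$ for vectors $w_i\in W_i$ to be chosen. Conjugation by the unitary $e^{tA_i}$ keeps $P_i(t)\in\mathcal G(l_i,\mathbb F^m)$. Using $P_ix=0$ and $w_i\in W_i$ one finds $\dot P_i(0)=xw_i^{*}+w_ix^{*}$, and because $x\perp V$ both $P_i$ and $P_j$ annihilate $x$, so the first-order term $\tfrac{d}{dt}\tr(P_i(t)P_j(t))\big|_{t=0}$ vanishes on every contact pair. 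The content is therefore second order, where a direct computation gives
\[\tfrac12\,\tfrac{d^2}{dt^2}\tr\PARENTH{P_i(t)P_j(t)}\big|_{t=0}=-\|P_jw_i\|^2-\|P_iw_j\|^2+2\,\mathrm{Re}\,(w_i^{*}w_j).\]
Thus the whole problem reduces to choosing the family $\{w_i\}_{i\in\mathcal I}$ so that $\|P_jw_i\|^2+\|P_iw_j\|^2>2\,\mathrm{Re}\,(w_i^{*}w_j)$ holds \emph{simultaneously} for all contact pairs.

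Before invoking the extra direction $x$, I would dispose of the easy case: if the active family already admits a tangent rotation inside $V$ that strictly decreases every contact $\tr(P_iP_j)$ to first order, then applying it and absorbing the active--inactive changes into the slack of the non-attaining pairs already contradicts optimality. So I may assume the active configuration is first-order critical within $V$; concretely, by a Gordan/separating-hyperplane argument there are nonnegative weights $\lambda_{ij}$ on the contact pairs, not all zero, with $\sum_{j}\lambda_{ij}[P_i,P_j]=0$ for each $i\in\mathcal I$. Since the commutators $[P_i,P_j]$ are supported on $V$, the first-order behaviour never sees $V^{\perp}$, which is precisely why the non-spanning hypothesis can only enter through the second-order tilt by $x$.

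The main obstacle is exactly this simultaneous negativity: the cross terms $2\,\mathrm{Re}\,(w_i^{*}w_j)$ couple the contact pairs, and a uniform tilt (all $w_i$ of equal length and sense) can raise $\tr(P_iP_j)$ on some pairs while lowering it on others -- a frustration phenomenon already visible for rank-one systems with positively correlated neighbors, where $2|w_i^{*}w_j|$ may exceed $\|P_jw_i\|^2+\|P_iw_j\|^2$. Overcoming it is the crux, and I expect to use the remaining freedom in the $w_i$ -- their magnitudes and, over $\mathbb C$, their phases -- together with the criticality balance $\{\lambda_{ij}\}$ and the structure of the contact graph to produce a single family $\{w_i\}$ for which every contact inequality is strict. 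Once such a family exists, the deformation $P_i(t)$ strictly lowers the packing constant for small $t>0$, contradicting the optimality of $\mathcal W$ and forcing $\spn\{W_i^{(l_i)}:i\in\mathcal I\}=\mathbb F^m$.
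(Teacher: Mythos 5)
Your overall strategy --- contradiction via a perturbation of the active subspaces toward a direction $x$ orthogonal to their span --- is the same as the paper's, and your second-order computation is correct. But the argument has a genuine gap exactly where you flag it: you never produce a family $\{w_i\}$ making $\|P_jw_i\|^2+\|P_iw_j\|^2>2\,\mathrm{Re}(w_i^*w_j)$ hold on every contact pair, and ``I expect to use the remaining freedom'' is not a proof. Worse, in the regime $m\le n\le d_{\mathbb F^m}+1$ a contact pair can a priori consist of \emph{orthogonal} subspaces ($\tr(P_iP_j)=0$, so $\mu<0$); in that case $P_jw_i=P_iw_j=0$ and $w_i^*w_j=0$ for \emph{every} admissible choice of $w_i\in W_i$, $w_j\in W_j$, so your second-order term vanishes identically and no tilt of this form can separate that pair. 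The paper must therefore first prove (Lemma~\ref{lem2}) that $\tr(P_i^{(l_i)}P_k^{(l_k)})>0$ for every packing neighbor --- a nontrivial case analysis combining the Rankin bound $\mu\ge -1/(n-1)$ with the possible values of the ranks $l_i,l_k$ --- and treat the orthonormal-lines configuration at $n=m$ as a separate trivial case where the theorem holds but no perturbation can improve anything. Your proposal contains none of this, and without it the key quantities $\|P_jw_i\|$ cannot be certified to be nonzero.

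The coupling (``frustration'') problem you identify is also largely self-inflicted: the paper perturbs the active subspaces \emph{one at a time} rather than simultaneously. Replacing a single basis vector $x_{k,1}\in W_k^{(l_k)}$ (chosen, via Lemma~\ref{lem2}, non-orthogonal to every packing neighbor) by $\sqrt{1-\delta^2}\,x_{k,1}+\delta z$ multiplies the relevant overlaps with the neighbors by $1-\delta^2$ with no cross terms, so each neighbor's trace strictly drops, while the quantitative Lemma~\ref{lem1} keeps the non-attaining pairs below threshold; iterating over $k\in\mathcal I$ finishes the proof. In your notation this amounts to setting all but one $w_i$ to zero at each stage, which reduces the contact inequality to $\|P_iw_k\|^2>0$ and eliminates the cross term entirely. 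Without either that sequential device or an actual construction of a simultaneous family $\{w_i\}$ (and the Gordan/criticality digression does not supply one), the argument does not close.
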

In order to prove Theorem \ref{thm3}, we need some lemmas. The first one can be proved similarly to Lemma 3.1 in \cite{CHST}.

\begin{lemma}\label{lem1}
	Let $0<\epsilon<\alpha$ and let $\{x_i\}_{i=1}^l$ and $\{y_i\}_{i=1}^k$ be unit vectors in $\mathbb{F}^m$ satisfying:
	\[\sum_{i=1}^l \sum_{j=1}^k \left| \langle x_i, y_j\rangle \right|^2 < \alpha - \epsilon.\] 
	Let $\delta$ be such that
	\[2\sqrt{l\delta(\alpha-\epsilon)} + l\delta < \frac \epsilon 2.\]
	If
	$\{z_i\}_{i=1}^{k}$ is a sequences of unit vectors in $\mathbb F^m$ satisfying
	\[
	\sum_{i=1}^k \|z_i - y_i\|^2 < \delta,
	\] then
	\[
	\sum_{i=1}^l \sum_{j=1}^k \left| \langle x_i, z_j\rangle \right|^2 < \alpha - \frac \epsilon 2.
	\]
\end{lemma}
\begin{lemma}\label{lem2}
	Let $\mathcal{W}=\{W_i^{(l_i)}\}_{i=1}^{n}, n\geq m$ be an optimally spread  mixed-rank packing which is not an orthogonal set of lines when $n=m$. If $W^{(l_k)}_k$ is an element of $\mathcal{W}$ that achieves the packing constant, then it contains a unit vector which is not orthogonal to any of its packing neighbors.
\end{lemma}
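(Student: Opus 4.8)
The plan is to argue by contradiction through the embedded vectors and reduce the entire statement to ruling out a single degenerate configuration. Enumerate the packing neighbors of $W_k^{(l_k)}$ and let $Q_1,\dots,Q_t$ be their orthogonal projections. A unit vector $x\in W_k$ fails the desired conclusion precisely when $Q_rx=0$ for some $r$, that is, when $x$ lies in the subspace $W_k\cap\ker Q_r$. Thus the conclusion holds unless $\bigcup_{r=1}^t\bigl(W_k\cap\ker Q_r\bigr)=W_k$. Since $\mathbb{F}$ is $\mathbb{R}$ or $\mathbb{C}$ and a vector space over an infinite field is never a finite union of proper subspaces, this equality forces $W_k\cap\ker Q_r=W_k$, i.e. $W_k\subseteq\ker Q_r$, for some $r$; equivalently, $W_k$ is \emph{completely} orthogonal to that neighbor. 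Hence it suffices to show that no packing neighbor of $W_k$ is completely orthogonal to $W_k$.

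First I would translate complete orthogonality into the embedded picture. If a packing neighbor with projection $Q$ (rank $l_i$) satisfies $W_k\perp\Imm(Q)$, then $\tr(P_kQ)=0$, so by \eqref{eqmixed} the packing constant equals
$$\mu=-\sqrt{\frac{l_kl_i}{(m-l_k)(m-l_i)}}.$$
The leverage comes from Rankin's inequality in its inner-product form (Theorem~\ref{thm2}): for the $n$ embedded unit vectors, $\mu\geq-\tfrac{1}{n-1}$. Combining the two gives $\sqrt{\tfrac{l_kl_i}{(m-l_k)(m-l_i)}}\leq\tfrac{1}{n-1}$. Since $l_k,l_i\geq 1$ and $(m-l_k)(m-l_i)\leq(m-1)^2$, the left-hand side is at least $\tfrac{1}{m-1}$, whence $\tfrac{1}{m-1}\leq\tfrac{1}{n-1}$ and therefore $n\leq m$. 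Together with the hypothesis $n\geq m$, this forces $n=m$ and equality throughout the chain.

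Next I would read off the consequences of equality. The inequalities collapse only when $l_k=l_i=1$ and $\mu=-\tfrac{1}{m-1}$, and when Rankin's bound is saturated. Saturation is immediate from $0\leq\|\sum_p v_p\|^2=n+\sum_{p\neq q}\langle v_p,v_q\rangle\leq n\bigl(1+(n-1)\mu\bigr)=0$, which forces $\sum_p v_p=0$ and $\langle v_p,v_q\rangle=-\tfrac{1}{m-1}$ for every pair. Converting $\sum_p v_p=0$ back through the traceless map (using that $\mathcal{V}$ is injective) yields the tight relation $\sum_p r_{l_p}^{-1}P_p=cI_m$ for a scalar $c$, while equiangularity together with \eqref{eqmixed} gives $\tr(P_pP_q)=\tfrac{l_pl_q}{m}-\tfrac{r_{l_p}r_{l_q}}{m-1}$ for $p\neq q$. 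An eigenvalue analysis of the positive operator $\sum_p r_{l_p}^{-1}P_p=cI_m$ then forces every $P_p$ to be rank one with mutually orthogonal ranges. For instance, when $m=3$ the radius $r_l=\sqrt{l(m-l)/m}$ is independent of $l$, so the relation reads $P_k+P_i+P_p=I_3$, and the eigenvalues $1,1,0$ of $P_k+P_i$ force $P_p$ to be the rank-one projection onto the line orthogonal to $W_k\oplus\Imm(Q)$. Consequently $\mathcal{W}$ is an orthogonal set of lines with $n=m$, contradicting the standing hypothesis, and the lemma follows.

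I expect the main obstacle to be exactly this last step: cleanly proving that the saturated configuration is \emph{precisely} an orthogonal set of lines, rather than merely pinning down $n=m$ and $l_k=l_i=1$. The subtlety is that the trace identities alone are consistent with higher ranks (indeed, the trace of the square of $\sum_p r_{l_p}^{-1}P_p$ reduces to the tautology $\sum_p 1=m$), so the rank-one and orthogonality conclusions must be extracted from the projection-valued spectral structure of the tight relation $\sum_p r_{l_p}^{-1}P_p=cI_m$ and not from trace computations; carrying this out uniformly in $m$ (where the weights $r_{l_p}^{-1}$ genuinely vary with the rank) is the part requiring care.
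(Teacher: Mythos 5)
Your reduction --- that the conclusion can only fail if some packing neighbor is \emph{completely} orthogonal to $W_k^{(l_k)}$, because a vector space over $\mathbb{R}$ or $\mathbb{C}$ is never a finite union of proper subspaces --- is exactly the paper's strategy, and your single chain $\tfrac{1}{m-1}\le\sqrt{l_kl_i/((m-l_k)(m-l_i))}\le\tfrac{1}{n-1}$ is a genuinely cleaner replacement for the paper's three-case analysis of the inequality $(n-1)^2l_il_k\le(m-l_i)(m-l_k)$. Both routes correctly isolate the same degenerate situation: $n=m$, $l_k=l_i=1$, $\mu=-\tfrac{1}{m-1}$, and (via your saturation argument, which is also correct) all embedded vectors forming a regular simplex with $\sum_p r_{l_p}^{-1}P_p=cI_m$.

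The gap is the final step, which you rightly flag as the obstacle but then assert anyway: the claim that the tight relation $\sum_p r_{l_p}^{-1}P_p=cI_m$ plus equiangularity forces every $P_p$ to be rank one with mutually orthogonal ranges is \emph{false} once $m\ge 4$, so no eigenvalue analysis will deliver it. Take $m=n=4$, $c=1/\sqrt{3}$, $s=\sqrt{2/3}$, and
\begin{align*}
W_3&=\spn\{e_1,e_2\}, & W_4&=\spn\{ce_1+se_3,\ ce_2+se_4\},\\
W_1&=\spn\{-se_1+(1+c)e_3\}, & W_2&=\spn\{-se_2+(1+c)e_4\}.
\end{align*}
One checks $\tr(P_1P_2)=0$, $\tr(P_1P_3)=\tr(P_1P_4)=\tr(P_2P_3)=\tr(P_2P_4)=\tfrac12-\tfrac{1}{2\sqrt3}$ and $\tr(P_3P_4)=\tfrac23$, so by \eqref{eqmixed} all six embedded inner products equal $-\tfrac13=-\tfrac{1}{n-1}$: the embedded vectors form a regular simplex, hence this rank-$(1,1,2,2)$ packing is optimally spread, it is not an orthogonal set of lines, every element achieves the packing constant, and yet the unique (up to scalars) unit vector of $W_1$ is orthogonal to its packing neighbor $W_2$. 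So your terminal configuration genuinely occurs and is not an orthogonal set of lines; the contradiction you are reaching for is unavailable. (Your $m=3$ computation succeeds only because there a rank-$2$ complement of a rank-$2$ projection cannot be a projection; with two planes available in $m=4$ the obstruction disappears.) Note that the paper's own proof dismisses this case with the unjustified sentence ``by what we have shown above, all subspaces must be 1-dimensional and pairwise orthogonal,'' and the example above satisfies every hypothesis of the lemma while violating its conclusion --- so the defect is not merely in your write-up: the statement needs a stronger hypothesis than ``not an orthogonal set of lines when $n=m$'' (e.g., excluding the simplex-saturating case outright and treating it separately in the proof of Theorem~\ref{thm3}, where the spanning conclusion does still hold for this example).
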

\begin{proof} Denote
	$$\mathcal{I}_k=\{i: W^{(l_i)}_i \mbox{ is a packing neighbor of } W^{(l_k)}_k\}.$$
	Let $\mu$ be the packing constant. By the definition of packing neighbor, we have that
	\[\tr(P^{(l_i)}_iP^{(l_k)}_k)=\mu\sqrt{\dfrac{l_il_k(m-l_i)(m-l_k)}{m^2}}+\dfrac{l_il_k}{m}, \mbox{ for all } i\in \mathcal{I}_k.\]
	First, we will show that $\tr(P_i^{(l_i)}P_k^{(l_k)})>0$ for all $i\in \mathcal{I}_k$.
	
	If $n>d_{\mathbb{F}^m}+1$, then this is obvious since $\mu\geq 0$.
	
	Consider the case $m\leq n\leq d_{\mathbb{F}^m}+1.$ In this case, the Rankin bound gives $\mu\geq -\frac{1}{n-1}$.
	
	Suppose $\tr(P_i^{(l_i)}P_k^{(l_k)})=0$ for some $i$. Then 
	\[\sqrt{\dfrac{m^2}{l_il_k(m-l_i)(m-l_k)}}\left(-\dfrac{l_il_k}{m}\right)\geq -\dfrac{1}{n-1}\]
	or equivalently,
	\[(n-1)^2l_il_k\leq (m-l_i)(m-l_k).\]
	But we have
	\begin{align*}
	(n-1)^2l_il_k- (m-l_i)(m-l_k)&\geq (m-1)^2l_il_k- (m-l_i)(m-l_k)\\
	&=(m^2-2m+1)l_il_k-m^2+ml_i+ml_k-l_il_k\\
	&=(m^2-2m)l_il_k-m^2+ml_i+ml_k.
	\end{align*}
	If $l_i\not=l_k$, then we can assume that $l_i>l_k$, so $l_i\geq l_k+1$. This implies
	\begin{align*}
	(n-1)^2l_il_k- (m-l_i)(m-l_k)&\geq(m^2-2m)l_il_k-m^2+ml_i+ml_k\\
	&\geq (m^2-2m)(l_k+1)l_k-m^2+m(l_k+1)+ml_k\\
	&=ml_k^2(m-2)+m^2(l_k-1)+m>0,
	\end{align*}
	a contradiction.
	
	If $l_i=l_k\geq 2$, then 
	\begin{align*}
	(n-1)^2l_il_k- (m-l_i)(m-l_k)&\geq(m^2-2m)l_il_k-m^2+ml_i+ml_k\\
	&= (m^2-2m)l_k^2-m^2+2ml_k\\
	&=(m-1)^2l_k^2-(m-l_k)^2>0,
	\end{align*}
	which is also a contradiction.
	
	Finally, if $l_i=l_k=1$ then $n=m$, and the packing constant $\mu=-\frac{1}{m-1}$. By Theorem \ref{thm2}, the corresponding embedded vectors of the packing form a simplex. This implies every element of $\mathcal{W}$ is a packing neighbor of $W_k^{(l_k)}$. By what we have shown above, all subspaces must be 1-dimensional and pairwise orthogonal. This contradicts our assumption. 
	
	Thus, we have shown that  $\tr(P_i^{(l_i)}P_k^{(l_k)})>0$ for all $i\in \mathcal{I}_k$.
	
	Now for each $i \in \mathcal{I}_k$, let $ V_i :=  W^{(l_k)}_k \cap  [W^{(l_i)}_i]^\perp$.  Since $\tr(P^{(l_i)}_iP^{(l_k)}_k)>0$, every $V_i$ is a proper subspace of $W^{(l_k)}_k$, and since a linear space cannot be written as a finite union of proper subspaces, it follows that $ W^{(l_k)}_k \backslash \cup_{i \in \mathcal{I}_k} V_i$ is nonempty, so the claim follows.
\end{proof}
\begin{proof}[Proof of Theorem \ref{thm3}]
	If $\mathcal{W}$ contains all 1-dimensional pairwise orthogonal subspaces, then the conclusion is clear.
	
	Let $\mu$ be the packing constant. We now proceed by way of contradiction. Iteratively replacing elements of $\mathcal W$ that achieve the packing constant in such a way that we eventually obtain a new packing in $\mathbb F^m$ with the packing constant less than $\mu$, which cannot exist.
	With the contradictory approach in mind, fix a unit vector $z \in \mathbb F^m$ so that $z$ is orthogonal to all $W_i^{(l_i)}, i \in \mathcal{I}$.
	
	For a fixed $k\in \mathcal{I}$, denote
	\[\mathcal{I}_k=\left\{1\leq i\leq n: i\not=k, \tr(P^{(l_i)}_iP^{(l_k)}_k)=\mu\sqrt{\dfrac{l_il_k(m-l_i)(m-l_k)}{m^2}}+\dfrac{l_il_k}{m}\right\}.\]
	Then 
	\[\tr(P^{(l_i)}_iP^{(l_k)}_k)<\mu\sqrt{\dfrac{l_il_k(m-l_i)(m-l_k)}{m^2}}+\dfrac{l_il_k}{m}, \mbox{ for all } i\in \mathcal{I}^c_k\setminus \{k\}.\]
	
	This implies there exists $\epsilon >0$ such that 
	\[\tr(P^{(l_i)}_iP^{(l_k)}_k)<\mu\sqrt{\dfrac{l_il_k(m-l_i)(m-l_k)}{m^2}}+\dfrac{l_il_k}{m}-\epsilon, \mbox{ for all } i\in \mathcal{I}^c_k\setminus \{k\}.\]
	By Lemma \ref{lem2}, there exists a unit vector $x_{k,1}\in W^{(l_k)}_k$ which is not orthogonal to any $W_i^{(l_i)}, i\in \mathcal{I}_k$. Extend it to an orthonormal basis $\{x_{k,j}\}_{j=1}^{l_k}$ and let $\{x_{i, j}\}_{j=1}^{l_i}$ be an orthonormal basis for $W_i^{(l_i)}$, for $i \not=k$.
	
	For each $i\in \mathcal{I}_k^c\setminus \{k\}$, denote 
	\[\alpha(i):=\mu\sqrt{\dfrac{l_il_k(m-l_i)(m-l_k)}{m^2}}+\dfrac{l_il_k}{m}\] and let $\delta$ be such that
	\[0<\delta<\dfrac{1}{2} \mbox{ and } 2\sqrt{l_i\delta(\alpha(i)-\epsilon)} + l_i\delta < \frac \epsilon 2, \mbox{ for all } i\in \mathcal{I}_k^c\setminus \{k\}.\]		
	Define $$
	y_{k,1}:=\sqrt{1-\delta^2} \, x_{k,1} + \delta z \text{ and } y_{k,j}:=x_{k,j} \text { for } 2 \leq j \leq l_k,
	$$
	and define $V^{(l_k)}_k := \spn\{y_{k,j}\}_{j=1}^{l_k}$ with corresponding orthogonal projection, $Q^{(l_k)}_k$.  Because $\langle x_{k,1}, z \rangle =0$, it follows that $\{y_{k,j}\}_{j=1}^{l_k}$ is an orthonormal basis for $ V^{(l_k)}_k$.
	
	Noting that $1-\delta^2 < \sqrt{1-\delta^2}$ implies $2(1 - \sqrt{1-\delta^2}) < 2 \delta^2$, we estimate
	\[\sum_{j=1}^{l_k} \|x_{k,j} - y_{k,j} \|^2 = \|x_{k,1} - y_{k,1} \|^2 
	= 2(1 - \sqrt{1-\delta^2}) < 2\delta^2 < \delta.\]
	Hence, using Lemma \ref{lem1}, where $\alpha=\alpha(i)=\mu\sqrt{\dfrac{l_il_k(m-l_i)(m-l_k)}{m^2}}+\dfrac{l_il_k}{m}$ we have that for each $i \in \mathcal{I}_k^c\setminus \{k\}$,
	$$
	\tr(P^{(l_i)}_i Q^{(l_k)}_k) = \sum_{j=1}^{l_i} \sum_{j'=1}^{l_k} \left| \langle x_{i,j}, y_{k, j'}  \rangle  \right|^2 < \mu\sqrt{\dfrac{l_il_k(m-l_i)(m-l_k)}{m^2}}+\dfrac{l_il_k}{m} - \frac \epsilon 2.
	$$
	Furthermore, the nonorthogonality of $x_{k,1}$ with $W_i$ for every $i \in \mathcal{I}_k$ implies
	\[
	0<
	\sum_{j=1}^{l_i} \left|\langle  x_{i,j}, y_{k,1}  \rangle\right|^2 = (1 - \delta^2) \sum_{j=1}^{l_i} \left|\langle x_{i,j},  x_{k,1}  \rangle\right|^2 < \sum_{j=1}^{l_i} \left|\langle  x_{i,j},  x_{k,1}  \rangle\right|^2  \text{ for all } i \in \mathcal{I}_k.
	\]
	It follows that
	\begin{align*}
	\tr(P^{(l_i)}_i Q^{(l_k)}_k)& =
	\sum_{j=1}^{l_i} \sum_{j'=1}^{l_k} \left|\langle   x_{i,j}, y_{k,j'} \rangle \right|^2\\
	&<\sum_{j=1}^{l_i} \sum_{j'=1}^{l_k} \left|\langle   x_{i,j}, x_{k,j'} \rangle \right|^2\\
	&=\tr(P^{(l_i)}_iP_k^{(l_k)}) = \mu\sqrt{\dfrac{l_il_k(m-l_i)(m-l_k)}{m^2}}+\dfrac{l_il_k}{m}, 
	\text{ for all } i \in \mathcal I_k.
	\end{align*}
	Thus, replacing $W_k^{(l_k)}$ by $V_k^{(l_k)}$, we have that
	\[	\tr(P^{(l_i)}_i Q^{(l_k)}_k)<\mu\sqrt{\dfrac{l_il_k(m-l_i)(m-l_k)}{m^2}}+\dfrac{l_il_k}{m}, 
	\text{ for all } i \not=k,\]
	i.e., 
	\[\mu>\sqrt{\dfrac{m^2}{l_il_k(m-l_i)(m-l_k)}}\left(\tr(P^{(l_i)}_i Q^{(l_k)}_k)-\dfrac{l_il_k}{m}\right), 
	\text{ for all } i \not=k.\]
	Now, pick another $k\in \mathcal{I}$ and iterate this replacement procedure. After finite repetitions of this process, we  obtain a final packing, 
	with packing constant strictly less than $\mu$, the desired contradiction.
	This completes the proof.
\end{proof}
For any fusion frame $\mathcal{P}=\{P_i\}_{i=1}^n$, its spatial complement is the family $\mathcal{P}^{\perp}=\{I_m-P_i\}_{i=1}^n$, where $I_m$ is the identity operator in $\mathbb{F}^m$.
The next property demonstrates that optimality of a packing $\mathcal{P}$ is preserved when we switch to its spatial complement.
\begin{proposition}
	If $\mathcal{P}$ is an optimally spread mixed-rank packing with mixture $s$, then so is its spatial complement. 
\end{proposition}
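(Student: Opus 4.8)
The plan is to exploit the single clean observation that spatial complementation is nothing more than negation at the level of embedded vectors. First I would compute the embedded vector of a complement: if $P$ has rank $l$ (with $1 \le l \le m-1$, so the embedding is defined), then $I_m - P$ has rank $m-l$, and by \eqref{eq:radii} the two normalizing radii coincide, since $r_{m-l}^2 = \frac{(m-l)(m-(m-l))}{m} = \frac{(m-l)l}{m} = r_l^2$. A direct computation then gives
$$T_{m-l}(I_m - P) = (I_m - P) - \frac{m-l}{m}I_m = \frac{l}{m}I_m - P = -\left(P - \frac{l}{m}I_m\right) = -T_l(P),$$
so that applying the fixed linear isomorphism $\mathcal{V}$ and dividing by $r_{m-l} = r_l$ yields that the embedded vector of $I_m - P$ is exactly $-v_P$. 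Because no rescaling intervenes, this negation is clean.

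Next I would record the set-level consequence. Since $P \mapsto I_m - P$ is a bijection of $\mathcal{G}(l,\mathbb{F}^m)$ onto $\mathcal{G}(m-l,\mathbb{F}^m)$, the previous identity gives $\mathcal{K}_{m-l} = -\mathcal{K}_l$; in particular, for each rank $l_i$ occurring in $\mathcal{P}$, the complementary elements of rank $m-l_i$ have their embedded vectors constrained to $\mathcal{K}_{m-l_i} = -\mathcal{K}_{l_i}$. I would then verify that both sides of the optimality equality in Definition \ref{defn1} are unchanged under complementation. On the left, if $\{v_i\}$ are the embedded vectors of $\mathcal{P}$, then those of $\mathcal{P}^\perp$ are $\{-v_i\}$, and $\langle -v_i, -v_j\rangle = \langle v_i, v_j\rangle$, so the attained maximum inner product for $\mathcal{P}^\perp$ equals that of $\mathcal{P}$. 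On the right, the coordinatewise map $\{u_i\} \mapsto \{-u_i\}$ is a bijection from $\prod_i \mathcal{K}_{m-l_i}$ onto $\prod_i \mathcal{K}_{l_i}$ that preserves every pairwise inner product, hence preserves the objective $\max_{i\neq j}\langle\cdot,\cdot\rangle$; therefore the two minima coincide. Combining these, the optimality equation holds for $\mathcal{P}^\perp$ with the same packing constant $\mu$.

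Finally, the mixture is preserved because $l \mapsto m-l$ is a bijection, so the multiset of complementary ranks $\{m-l_i\}$ has exactly as many distinct values as $\{l_i\}$, namely $s$; thus $\mathcal{P}^\perp$ is again a mixed-rank packing of mixture $s$.

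I do not expect a serious obstacle: the entire argument collapses once one notices $v_{I_m - P} = -v_P$. The only points that genuinely demand care are the verification that $r_{m-l} = r_l$ (so that the negation carries no stray scalar factor) and the bookkeeping around the product structure of the minimization domain — the bijection used on the right-hand side must act coordinatewise and respect the constraint that each coordinate lie in its own prescribed sphere-section $\mathcal{K}_{m-l_i}$, which is precisely what $\mathcal{K}_{m-l_i} = -\mathcal{K}_{l_i}$ guarantees.
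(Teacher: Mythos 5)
Your proof is correct and takes essentially the same approach as the paper's, which rests on the single observation that $v_{I_m-P}=-v_P$ and then invokes the definition of optimal spread. The paper states this tersely (``it is easy to see that $v_Q=-v_P$\dots the conclusion then follows by our definition''), whereas you supply the supporting computations ($r_{m-l}=r_l$, $T_{m-l}(I_m-P)=-T_l(P)$, and the inner-product-preserving bijection $\prod_i\mathcal{K}_{m-l_i}\to\prod_i\mathcal{K}_{l_i}$) explicitly; no substantive difference.
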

\begin{proof}
	Suppose $\mathcal{P}$ is an optimally spread mixed-rank packing for $\mathbb{F}^m$. Let any $P\in \mathcal{P}$, then $Q:=I_m-P\in \mathcal{P}^\perp$. It is easy to see that $v_Q=-v_P$, where $v_P$ and $v_Q$ are the embedded vectors corresponding to $P$ and $Q$. The conclusion then follows by our definition of optimally spread mixed-rank packings.
\end{proof}
Given a mixed-rank packing $\mathcal{P}$ in $\mathbb{F}^m$, if $|\mathcal{P}|= 2d_{\mathbb{F}^m}$ and the corresponding embedded vectors form a orthoplex in $\mathbb{R}^{d_{\mathbb{F}^m}}$, then we will say that $\mathcal{P}$ is an {\bf maximal orthoplectic fusion frame}. The following is a nice property of such packings.
\begin{theorem}
	All maximal orthoplectic fusion frames are tight.
\end{theorem}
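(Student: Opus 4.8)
The plan is to exploit the antipodal structure of the orthoplex together with the earlier computation that the spatial complement $I_m - P$ embeds to the antipode $-v_P$ of $v_P$. Recall that an orthoplex in $\mathbb{R}^{d_{\mathbb{F}^m}}$ is the union of an orthonormal basis with its negatives; hence its $2d_{\mathbb{F}^m}$ vectors split into exactly $d_{\mathbb{F}^m}$ antipodal pairs $\{v, -v\}$. So I would organize the packing's embedded vectors into these pairs and, for each projection $P$ in $\mathcal{P}$, identify the projection sitting opposite it.

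The crucial step is to show that the only projection whose embedded vector equals $-v_P$ is the spatial complement $I_m - P$. Writing $v_Q = -v_P$ and applying the linear isomorphism $\mathcal{V}^{-1}$ together with the definition of the traceless map, I obtain the matrix identity
\[
Q - \frac{l_Q}{m} I_m = -\frac{r_{l_Q}}{r_{l_P}}\left(P - \frac{l_P}{m} I_m\right).
\]
I would then compare spectra on both sides: the left-hand side has eigenvalue $\frac{m - l_Q}{m}$ with multiplicity $l_Q$ and $-\frac{l_Q}{m}$ with multiplicity $m - l_Q$, while the right-hand side has eigenvalue $\frac{r_{l_Q}}{r_{l_P}} \cdot \frac{l_P}{m}$ with multiplicity $m - l_P$ and $-\frac{r_{l_Q}}{r_{l_P}} \cdot \frac{m - l_P}{m}$ with multiplicity $l_P$. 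Matching the (unique) positive eigenvalue and its multiplicity forces $l_Q = m - l_P$, whence $r_{l_Q} = r_{l_P}$ by \eqref{eq:radii}, and the identity collapses to $Q - \frac{m - l_P}{m} I_m = -\left(P - \frac{l_P}{m} I_m\right)$, i.e. $Q = I_m - P$.

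With this rigidity in hand, the packing is exactly a disjoint union of $d_{\mathbb{F}^m}$ complementary pairs $\{P_k, I_m - P_k\}_{k=1}^{d_{\mathbb{F}^m}}$. I would finish by computing the fusion frame operator directly:
\[
F_P = \sum_{k=1}^{d_{\mathbb{F}^m}} \left( P_k + (I_m - P_k) \right) = d_{\mathbb{F}^m}\, I_m,
\]
which is a positive multiple of $I_m$; by definition $\mathcal{P}$ is therefore tight.

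I expect the main obstacle to be this spectral-matching step, namely verifying that antipodality in the embedding \emph{rigidly} forces $Q = I_m - P$ rather than merely $l_Q = m - l_P$ for some other projection of the complementary rank. The subtleties are that the eigenvalue multiplicities on the two sides are a priori attached to different eigenspaces, so one must carefully track which multiplicity pairs with which eigenvalue, and one should confirm that each pair genuinely consists of two distinct projections (which holds, since $P = I_m - P$ is impossible for a projection and the embedded vectors are nonzero unit vectors, so no vector is its own antipode). Everything else follows directly from the antipodal decomposition of the orthoplex and the linearity of $\mathcal{V}$ and the traceless map.
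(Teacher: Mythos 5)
Your proposal is correct and follows essentially the same route as the paper: both identify each element's antipodal partner in the orthoplex, show that partner must be the spatial complement $I_m-P$, and then sum the complementary pairs to get $F_P=d_{\mathbb{F}^m}I_m$. The only difference is cosmetic --- you establish $Q=I_m-P$ by matching spectra of the two sides of the embedding identity, whereas the paper evaluates that same identity on test vectors to show $W_i\oplus W_j=\mathbb{F}^m$; if anything, your version makes explicit the orthogonality of the pair that the paper's ``it is sufficient to show $W_i\oplus W_j=\mathbb{F}^m$'' leaves implicit.
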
		
\begin{proof}
	Suppose $\mathcal{P}=\{P_i^{(l_i)}\}_{i=1}^{2d_{\mathbb{F}^m}}$ is a maximal orthoplectic fusion frame. By definition, the corresponding embedded vectors of $\mathcal{P}$ form an orthoplex in $\mathbb{R}^{2d_{\mathbb{F}^m}}$. This implies that for every $i$, there exists $j$ such that 
	
	\begin{equation}\label{eqmax}
	\sqrt{\dfrac{m}{l_i(m-l_i)}}\left(P^{(l_i)}_i-\dfrac{l_i}{m}I_m\right)=-\sqrt{\dfrac{m}{l_j(m-l_j)}}\left(P^{(l_j)}_j-\dfrac{l_j}{m}I_m\right).
	\end{equation}
	
	Let $W_i$ and $W_j$ be the associated subspaces to $P^{(l_i)}_i$ and $P^{(l_j)}_j$, respectively. It is sufficient to show that $W_i\oplus W_j=\mathbb{F}^m$.
	
	We proceed by way of contradiction. If $\spn\{W_i, W_j\}\not=\mathbb{F}^m$, then let $x$ be orthogonal to both $W_i$ and $W_j$. 
	Using \eqref{eqmax}, we get
	
	\[-\dfrac{l_i}{m}\sqrt{\dfrac{m}{l_i(m-l_i)}}=\dfrac{l_j}{m}\sqrt{\dfrac{m}{l_j(m-l_j)}},\] which is impossible. 
	
	Likewise, if $W_i\cap W_j\not=\{0\}$, let a non-zero vector $x$ be in the intersection, then we get
	
	\[\sqrt{\dfrac{m}{l_i(m-l_i)}}\left(1-\dfrac{l_i}{m}\right)=-\sqrt{\dfrac{m}{l_j(m-l_j)}}\left(1-\dfrac{l_j}{m}\right),\] which is again impossible. 
	The conclusion then follows.
\end{proof}
\section{Constructing solutions}\label{sec6}
In this section, we will construct some infinite families of solutions for the optimal mixed-rank packing problem in $\mathbb F^m$ when the number of projections $n$ exceeds $d_{\mathbb{F}^m}+1$. By Identity \eqref{eqmixed}, we will construct packings $\mathcal{P}=\{P^{(l_i)}_j\}_{i=1, j=1}^{s,\ \  n_i} \in \prod_{i=1}^s \prod_{j=1}^{n_i}    \mathcal G(l_i, \mathbb F^m)$ such that $n=\sum_{i=1}^{s}n_i>d_{\mathbb{F}^m}+1$ and $\tr(P_j^{(l_i)}P_{j'}^{(l_{i'})})\leq \frac{l_il_{i'}}{m}$, for any $(l_i, j)\not=(l_{i'}, j')$. This implies that the corresponding embedded vectors $\{v_j^{(l_i)}\}_{i=1, j=1}^{s, \ \ n_i}$ are a solution for the usual (unrestricted) coding problem by Theorem \ref{thm2}. Thus, by definition \ref{defn1}, $\mathcal{P}$ is a solution for the optimal mixed-rank packing problem.  

Similar to the constructions in \cite{BH1, CHST}, we will construct mixed-rank packings containing coordinate projections as defined below:
\begin{definition}
	Given an orthonormal basis $\mathcal{B}=\{b_j\}_{j=1}^m$ for $\mathbb{F}^m$ and a subset $\mathcal{J}\subset \INDEX{m}$, the $\mathcal{J}$-coordinate projection with respect to $\mathcal{B}$ is $P^{\mathcal{B}}_\mathcal{J}=\sum_{j\in \mathcal{J}}b_jb_j^*$.
\end{definition} 
Our main tools for the constructions are mutually unbiased bases (MUBs) and 
block designs. 

\begin{definition}
	If $\mathcal{ B}=\{b_j\}_{j=1}^m$ and $\mathcal{B'}=\{b'_{j}\}_{j=1}^m$ are a pair of orthogonal bases for $\mathbb F^m$, then they are mutually unbiased bases if $|\langle b_j, b'_{j'}\rangle|^2=\frac{1}{m}$ for every $j, j'\in \INDEX{m}.$ A collection of orthonormal bases $\{\mathcal{B}_k\}_{k\in K}$ is called a set of mutually unbiased bases (MUBs) if the pair $\mathcal{B}_k$ and $\mathcal{B}_{k'}$ is mutually unbiased for every $k\not=k'$.
\end{definition}
As in \cite{BH1, CHST}, maximal sets of MUBs play an important role in our constructions. The following theorems give an upper bound for the cardinality of the set of MUBs in terms of $m$, and sufficient conditions to attain this bound.
\begin{theorem} [\cite{DGS}]
	If $\{B_k\}_{k\in K}$ is a set of MUBs for $\mathbb F^m$, then $|K|\leq m/2+1$ if $\mathbb{F}=\mathbb R$, and $|K|\leq m+1$ if $\mathbb{F}=\mathbb{C}$.
\end{theorem}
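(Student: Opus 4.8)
The plan is to push each basis vector to a rank-one orthogonal projection and then apply the traceless embedding $T_1$ from Section~\ref{sec3}, turning the whole MUB configuration into a collection of vectors in the $d_{\mathbb{F}^m}$-dimensional real inner product space $\mathbb{H}$ of traceless symmetric/hermitian matrices, and to extract the bound from a dimension count. Concretely, for each $k \in K$ write $\mathcal{B}_k = \{b_{k,j}\}_{j=1}^m$, let $P_{k,j} = b_{k,j}b_{k,j}^*$ be the associated rank-one projection, and set $A_{k,j} := T_1(P_{k,j}) = P_{k,j} - \frac{1}{m}I_m \in \mathbb{H}$.

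First I would record the Hilbert--Schmidt inner products among the $A_{k,j}$. Expanding $\tr(A_{k,j}A_{k',j'})$ and using $\tr(P_{k,j}) = 1$ gives $\tr(A_{k,j}A_{k',j'}) = \tr(P_{k,j}P_{k',j'}) - \frac{1}{m} = |\langle b_{k,j}, b_{k',j'}\rangle|^2 - \frac{1}{m}$. Three cases follow immediately: within a single basis the vectors are orthonormal, so $\tr(A_{k,j}^2) = \frac{m-1}{m}$ and $\tr(A_{k,j}A_{k,j'}) = -\frac{1}{m}$ for $j \neq j'$; across two distinct bases the defining MUB relation $|\langle b_{k,j}, b_{k',j'}\rangle|^2 = \frac{1}{m}$ forces $\tr(A_{k,j}A_{k',j'}) = 0$.

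Next I would exploit these relations structurally. Since $\{b_{k,j}\}_{j=1}^m$ is an orthonormal basis, $\sum_{j=1}^m P_{k,j} = I_m$, hence $\sum_{j=1}^m A_{k,j} = 0$; combined with the Gram matrix computed above (diagonal $\frac{m-1}{m}$, off-diagonal $-\frac{1}{m}$, which equals $\frac{1}{m}(mI_m - J)$ with $J$ the all-ones matrix, of rank $m-1$), the span $U_k := \spn\{A_{k,j}: 1 \le j \le m\}$ has dimension exactly $m-1$. The cross-basis orthogonality from the previous step says precisely that $U_k \perp U_{k'}$ in $\mathbb{H}$ whenever $k \neq k'$. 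Therefore $\{U_k\}_{k \in K}$ is a family of pairwise orthogonal $(m-1)$-dimensional subspaces of $\mathbb{H}$, so $|K|(m-1) \le \dim \mathbb{H} = d_{\mathbb{F}^m}$.

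Finally, substituting $d_{\mathbb{C}^m} = m^2 - 1 = (m-1)(m+1)$ yields $|K| \le m+1$, and $d_{\mathbb{R}^m} = \frac{m(m+1)}{2} - 1 = \frac{(m-1)(m+2)}{2}$ yields $|K| \le \frac{m+2}{2} = m/2 + 1$, completing both cases. The only real content — and the step I expect to carry the argument — is the cross-basis orthogonality $\tr(A_{k,j}A_{k',j'}) = 0$: it is exactly here that the unbiasedness hypothesis enters, converting the metric condition $|\langle \cdot,\cdot\rangle|^2 = \frac{1}{m}$ into honest orthogonality of the embedded frames, after which the bound is a pure linear-algebra packing count. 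There is no serious obstacle beyond organizing these observations; the delicate points are merely to confirm $\dim U_k = m-1$ (rather than $m$) via the simplex relation $\sum_{j=1}^m A_{k,j}=0$, and to insert the correct value of $d_{\mathbb{F}^m}$ in each field.
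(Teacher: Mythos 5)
Your argument is correct, and I can verify every step: the Hilbert--Schmidt computation $\tr(A_{k,j}A_{k',j'}) = |\langle b_{k,j}, b_{k',j'}\rangle|^2 - \tfrac{1}{m}$ is right, the Gram matrix $I_m - \tfrac{1}{m}J$ of each embedded basis does have rank $m-1$ (consistent with $\sum_{j=1}^m A_{k,j}=0$), unbiasedness does translate exactly into orthogonality of the subspaces $U_k$ inside $\mathbb{H}$, and the final division of $d_{\mathbb{C}^m}=(m-1)(m+1)$ and $d_{\mathbb{R}^m}=\tfrac{(m-1)(m+2)}{2}$ by $m-1$ gives the two stated bounds. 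Note, however, that the paper does not prove this theorem at all: it is imported as a black box from Delsarte--Goethals--Seidel \cite{DGS}, so there is no in-paper proof to compare against. What you have written is the classical dimension-counting proof from that literature, and it is pleasantly consistent with the paper's own machinery --- it is precisely the $l=1$ traceless embedding $T_1$ into the $d_{\mathbb{F}^m}$-dimensional space $\mathbb{H}$ of Section~\ref{sec3}, reused to bound $|K|$ rather than to bound coherence. The one point worth being explicit about (you implicitly get it right) is that for $\mathbb{F}=\mathbb{C}$ the hermitian traceless matrices form a \emph{real} inner product space of dimension $m^2-1$, and all the inner products $\tr(A_{k,j}A_{k',j'})$ are real, so the packing count of pairwise orthogonal $(m-1)$-dimensional real subspaces is legitimate.
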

\begin{theorem} [\cite{CS, WF}]\label{maximal MUBs}
	If $m$ is a prime power, then a family of $m+1$ pairwise mutually unbiased for $\mathbb{C}^m$ exists. If $m$ is a power of $4$, then a family of $m/2+1$ pairwise mutually unbiased for $\mathbb{R}^m$ exists.
\end{theorem}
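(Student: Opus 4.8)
The plan is to treat both assertions as constructive existence statements: the upper bounds $m+1$ (complex) and $m/2+1$ (real) are already furnished by the bound of Delsarte--Goethals--Seidel stated just above, so it remains only to exhibit families attaining them. In each case I would build the bases explicitly from the arithmetic of a finite field of order $m$, indexing the standard basis of the ambient space by the field elements, and then reduce the mutual-unbiasedness condition to the constant-modulus evaluation of quadratic character (Gauss) sums.

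For the complex case with $m=p^r$ an \emph{odd} prime power, let $\mathbb{F}_m$ denote the field of order $m$, let $\mathrm{Tr}:\mathbb{F}_m\to\mathbb{F}_p$ be the absolute trace, and set $\chi(y)=e^{2\pi i\,\mathrm{Tr}(y)/p}$, a nontrivial additive character. Index the canonical orthonormal basis of $\mathbb{C}^m$ as $\{e_x:x\in\mathbb{F}_m\}$ and take it as one basis $\mathcal{B}_\infty$. For each $a\in\mathbb{F}_m$ define
\[
v^{(a)}_b=\frac{1}{\sqrt m}\sum_{x\in\mathbb{F}_m}\chi(ax^2+bx)\,e_x,\qquad b\in\mathbb{F}_m,
\]
and set $\mathcal{B}_a=\{v^{(a)}_b:b\in\mathbb{F}_m\}$. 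I would then check the three required relations in turn. Orthonormality within $\mathcal{B}_a$ follows from $\langle v^{(a)}_b,v^{(a)}_{b'}\rangle=\frac{1}{m}\sum_x\chi((b'-b)x)$ together with the orthogonality relation $\sum_x\chi(cx)=m\,\delta_{c,0}$. Unbiasedness of $\mathcal{B}_a$ against $\mathcal{B}_\infty$ is immediate from $|\langle e_x,v^{(a)}_b\rangle|=1/\sqrt m$. Finally, for $a\neq a'$ the overlap $\langle v^{(a)}_b,v^{(a')}_{b'}\rangle=\frac{1}{m}\sum_x\chi((a'-a)x^2+(b'-b)x)$ is a normalized Gauss sum with nonzero leading coefficient $a'-a$, whose modulus is exactly $\sqrt m$; hence the squared overlap equals $1/m$, as required. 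This yields $m+1$ mutually unbiased bases.

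For $m$ a power of $2$ the squaring map is additive over $\mathbb{F}_m$, so the quadratic-sum computation above degenerates. Here I would instead lift the construction to the Galois ring $\mathrm{GR}(4,r)$, replacing $\chi$ by a character of order $4$ and $ax^2+bx$ by an appropriate $\mathbb{Z}_4$-valued quadratic form, after which unbiasedness reduces to a Weil-type bound for the resulting quaternary exponential sums. The real case, $m=4^t$, I would handle through the equivalent combinatorial object: a full set of $m/2+1$ real mutually unbiased bases corresponds to an orthogonal spread in a binary orthogonal geometry of plus type, equivalently to a Kerdock set of alternating $\mathbb{F}_2$-matrices whose pairwise differences are nonsingular. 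From such a spread one reads off the bases, and pairwise unbiasedness again becomes the statement that the associated quadratic forms are nondegenerate, so that the corresponding real Gauss sums have constant modulus.

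The main obstacle is precisely the characteristic-$2$ and real part of the argument. For odd $m$ the Gauss-sum estimate is classical and the bases drop out immediately; but for $m$ a power of $2$ the clean quadratic-form picture fails over $\mathbb{F}_2$, and one is forced into the Galois-ring / Kerdock machinery, where the crux is the \emph{existence} of the relevant orthogonal spreads (Kerdock sets) and the constant-modulus evaluation of the associated quaternary sums. Establishing that these combinatorial objects exist for every power of $4$ --- rather than merely bounding how many mutually unbiased bases could exist --- is the heart of the matter, and is exactly the content imported from the cited works.
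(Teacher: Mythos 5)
The paper does not prove this theorem; it imports it verbatim from the cited works of Cameron--Seidel and Wootters--Fields, so there is no internal argument to compare yours against. Judged on its own terms, your sketch is sound and essentially complete for the complex case with $m$ an \emph{odd} prime power: the orthogonality relation $\sum_x\chi(cx)=m\,\delta_{c,0}$, the trivial unbiasedness of each $\mathcal{B}_a$ against the standard basis, and the classical evaluation $\bigl|\sum_{x\in\mathbb{F}_m}\chi(cx^2+dx)\bigr|=\sqrt m$ for $c\neq0$ in odd characteristic together yield $m+1$ mutually unbiased bases, and this is exactly the Wootters--Fields construction.

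The gap is that this covers none of the remaining content of the statement. The complex assertion includes $m=2^r$, and the real assertion is \emph{only} about $m=4^t$; in both of these cases the form $ax^2+bx$ degenerates over $\mathbb{F}_2$ because squaring is additive, as you acknowledge. You correctly name the replacement machinery --- order-$4$ characters on the Galois ring $\mathrm{GR}(4,r)$ for the complex case, and orthogonal spreads, equivalently Kerdock sets of alternating matrices with pairwise nonsingular differences, for the real case --- but you neither construct these objects nor carry out the constant-modulus evaluation of the resulting quaternary exponential sums; you explicitly defer both to the references. Since the existence of such a spread for every admissible $r$ is the entire substance of the power-of-$2$ and power-of-$4$ cases, the proposal as written establishes only the odd-prime-power half of the first sentence of the theorem. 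To close the gap you would need either to write out the $\mathbb{Z}_4$-Kerdock construction and verify the modulus of the associated sums, or to leave the even-characteristic cases as cited facts --- which is in effect what the paper itself does by stating the theorem with references and no proof.
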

Henceforth, we abbreviate $k_{\mathbb{R}^m}=m/2+1$ and $k_{\mathbb{C}^m}=m+1$.
In order to construct the desired packings, the following simple proposition is very useful.
\begin{proposition}\label{pro3}
	\begin{enumerate}
		\item Let $\mathcal{B}=\{b_j\}_{j=1}^m$ be an orthonormal basis for $\mathbb{F}^m$. Then for any subsets $\mathcal{J}, \mathcal{J}'\in \INDEX{m}$, we have 
		\[\tr(P^{\mathcal{B}}_\mathcal{J}P^{\mathcal{B}}_{\mathcal{J}'})= |\mathcal{J} \cap \mathcal{J}'|.\]
		\item Let $\mathcal{B}_1=\{x_j\}_{j=1}^m$ and $\mathcal{B}_2=\{y_j\}_{j=1}^m$ be a pair of MUBs for $\mathbb{F}^m$.
		Then for any subsets $\mathcal{J}, \mathcal{J}'\in \INDEX{m}$, we have 
		\[\tr(P^{\mathcal{B}_1}_\mathcal{J}P^{\mathcal{B}_2}_{\mathcal{J}'})= \dfrac{|\mathcal{J}||\mathcal{J}'|}{m}.\]
	\end{enumerate}
\end{proposition}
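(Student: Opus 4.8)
The plan is to prove both identities by direct computation, expanding each coordinate projection into its constituent rank-one summands and then exploiting the linearity and cyclic invariance of the trace together with the defining orthogonality relations of the bases involved. In both cases the work reduces to evaluating $\tr$ of a product of two sums of rank-one projections, which splits into a double sum whose individual terms are readily identified.

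For part (1), I would write $P^{\mathcal{B}}_\mathcal{J}=\sum_{j\in \mathcal{J}}b_jb_j^*$ and $P^{\mathcal{B}}_{\mathcal{J}'}=\sum_{k\in \mathcal{J}'}b_kb_k^*$, so that their product is the double sum $\sum_{j\in\mathcal{J}}\sum_{k\in\mathcal{J}'}b_j(b_j^*b_k)b_k^*$. Since $\mathcal{B}$ is orthonormal, $b_j^*b_k=\delta_{jk}$, and the double sum collapses to $\sum_{j\in\mathcal{J}\cap\mathcal{J}'}b_jb_j^*$. Taking the trace and using $\tr(b_jb_j^*)=b_j^*b_j=1$ then yields exactly $|\mathcal{J}\cap\mathcal{J}'|$.

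For part (2), the two projections are now built from distinct bases, so no collapsing occurs; instead I would retain the full double sum $\tr(P^{\mathcal{B}_1}_\mathcal{J}P^{\mathcal{B}_2}_{\mathcal{J}'})=\sum_{j\in\mathcal{J}}\sum_{k\in\mathcal{J}'}\tr(x_jx_j^*y_ky_k^*)$ and apply the cyclic property of the trace to rewrite each summand as $\tr(y_k^*x_jx_j^*y_k)=|\langle x_j,y_k\rangle|^2$. The mutual unbiasedness hypothesis gives $|\langle x_j,y_k\rangle|^2=\frac1m$ for every pair $(j,k)$, so the double sum simplifies at once to $\frac{|\mathcal{J}||\mathcal{J}'|}{m}$.

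There is no serious obstacle here, as both identities reduce to careful bookkeeping rather than any genuine difficulty. The only points requiring minor attention are the consistent use of the adjoint/inner-product convention, so that the scalar $b_j^*b_k$ is correctly read as an inner product (which in part~(1) is simply $\delta_{jk}$), and the correct application of the cyclic trace identity to reduce $\tr(x_jx_j^*y_ky_k^*)$ to the squared modulus of a single inner product, where the conjugation ambiguity is harmless since only the modulus enters.
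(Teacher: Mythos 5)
Your proof is correct and follows essentially the same route as the paper: expand both projections into rank-one summands, use linearity and cyclicity of the trace to reduce each term to $|\langle \cdot,\cdot\rangle|^2$, and then apply orthonormality in part (1) and mutual unbiasedness in part (2). The only cosmetic difference is that in part (1) you collapse the double sum via $b_j^*b_k=\delta_{jk}$ before taking the trace, whereas the paper takes the trace termwise first; the computations are identical in substance.
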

\begin{proof}

	We compute
	\[\tr(P^{\mathcal{B}}_\mathcal{J}P^{\mathcal{B}}_{\mathcal{J}'})=\sum_{j\in \mathcal{J}}\sum_{j'\in \mathcal{J}'}\tr(b_jb_j^*b_{j'}b_{j'}^*)=\sum_{j\in \mathcal{J}}\sum_{j'\in \mathcal{J}'}|\langle b_j,b_{j'}\rangle|^2=|\mathcal{J} \cap \mathcal{J}'|,\] which is (1). 
	
	For (2), since $\mathcal{B}_1$ and $\mathcal{B}_2$ are mutually unbiased, it follows that $|\langle x_j, y_{j'}\rangle|^2=1/m$, for every $j\in \mathcal{J}, j'\in \mathcal{J}'$. Therefore,  
	\[\tr(P^{\mathcal{B}_1}_JP^{\mathcal{B}_2}_{J'})=\sum_{j\in \mathcal{J}}\sum_{j'\in \mathcal{J}'}\tr(x_jx_j^*y_{j'}y_{j'}^*)=\sum_{j\in \mathcal{J}}\sum_{j'\in \mathcal{J}'}|\langle x_j,y_{j'}\rangle|^2=\dfrac{|\mathcal{J}||\mathcal{J}'|}{m},\] which is the claim.
\end{proof}

To control the trace between coordinate projections generated by the same orthonormal basis, we need the following concept stated in \cite{BH1}.

\begin{definition}
	Let $\mathbb{S}$ be a collection of subsets of $\INDEX{m}$ such that each $\mathcal{J}$ of $\mathbb{S}$ has the same cardinality. We say that $\mathbb{S}$ is $c$-cohesive if there exists $c>0$ such that $\underset{\mathcal{J}, \mathcal{J}'\in \mathbb{S}, \mathcal{J}\not=\mathcal{J}'}{\max}| \mathcal{J}\cap \mathcal{J}'|\leq c.$
\end{definition}

Another ingredient for our constructions is block designs.
\begin{definition}
	A $t$-$(m, l, \lambda)$ block design is a collection $\mathbb{S}$ of subsets of $\INDEX{m}$, called blocks, where each block $\mathcal{J}\in \mathbb{S}$ has cardinality $l$, such that every subset of $\INDEX{m}$ with cardinality $t$ is contained in exactly $\lambda$ blocks and each element of $\INDEX{m}$ occurs in exactly $r$ blocks. We also denote $b=|\mathbb{S}|$, the cardinality of $\mathbb{S}$. If $t=2$, then the design is called a {\bf balanced incomplete block design or BIBD}. When the parameters are not important or implied by the context, then $\mathbb{S}$ is also referred to as a $t$-block design.
\end{definition}
The following proposition gives a few simple facts about block designs.
\begin{proposition}
	For any $t$-$(m, l, \lambda)$ block design, the following conditions hold:
	\begin{enumerate}
		\item $mr=bl$,
		\item $r(l-1)=\lambda(m-1)$ if $t=2$.
	\end{enumerate}
	Furthermore, a $t$-block design is also a $(t-1)$-block design for $t>1$.
\end{proposition}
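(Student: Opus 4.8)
The plan is to prove all three assertions by the same elementary device, \emph{double counting of incidences}: in each case I count, in two different ways, a suitable set of flags $(x,\mathcal{J})$ consisting of a point $x\in\INDEX{m}$ lying in a block $\mathcal{J}\in\mathbb{S}$ (or a slight variant thereof). This is the standard route to the basic identities of design theory, and the only thing one must supply is the correct incidence set in each part.

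First I would establish $(1)$. Consider the set of pairs $\{(x,\mathcal{J}): x\in\mathcal{J},\ \mathcal{J}\in\mathbb{S}\}$. Summing first over points, each of the $m$ elements of $\INDEX{m}$ lies in exactly $r$ blocks, giving a total of $mr$; summing first over blocks, each of the $b$ blocks contains exactly $l$ points, giving a total of $bl$. Equating the two counts yields $mr=bl$. For $(2)$, with $t=2$, I would fix a point $x\in\INDEX{m}$ and count the pairs $(y,\mathcal{J})$ with $y\neq x$ and $\{x,y\}\subseteq\mathcal{J}\in\mathbb{S}$. Summing over $y$: there are $m-1$ choices of $y$, and each $2$-subset $\{x,y\}$ lies in exactly $\lambda$ blocks, so the total is $\lambda(m-1)$. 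Summing over the blocks through $x$: there are $r$ such blocks, each contributing its remaining $l-1$ points, so the total is $r(l-1)$. Equating gives $r(l-1)=\lambda(m-1)$.

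For the final reduction claim, I would fix an \emph{arbitrary} $(t-1)$-subset $S\subseteq\INDEX{m}$ and count the pairs $(x,\mathcal{J})$ with $x\in\INDEX{m}\setminus S$ and $S\cup\{x\}\subseteq\mathcal{J}\in\mathbb{S}$. Summing over $x$: each of the $m-(t-1)$ points outside $S$ completes $S$ to a $t$-subset, which lies in exactly $\lambda$ blocks, so the total is $(m-t+1)\lambda$. Summing over the blocks containing $S$, of which there are say $\lambda_S$, each such block contributes its $l-(t-1)$ points lying outside $S$, so the total is $\lambda_S(l-t+1)$. Hence $\lambda_S=(m-t+1)\lambda/(l-t+1)$. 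Since each point still occurs in $r$ blocks and each block still has cardinality $l$, this exhibits $\mathbb{S}$ as a $(t-1)$-$(m,l,\lambda')$ design with $\lambda':=\lambda_S$.

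The only point deserving attention — the ``main obstacle'' in an otherwise purely routine argument — is the claim that $\lambda_S$ is genuinely \emph{independent} of the chosen subset $S$, which is precisely what is needed for $\mathbb{S}$ to qualify as a $(t-1)$-design. This independence is exactly what the double count delivers for free: the alternative count $(m-t+1)\lambda$ involves only the global parameters $m$, $t$, and $\lambda$, with no reference to $S$, so $\lambda_S$ takes the same value for every $(t-1)$-subset. Everything else is bookkeeping.
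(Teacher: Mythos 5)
Your proof is correct. The paper states this proposition without any proof at all, treating it as a standard fact from design theory, so there is no argument of the authors' to compare against; your double-counting of flags $(x,\mathcal{J})$ is the canonical textbook derivation of all three identities, and the observation that the count $(m-t+1)\lambda/(l-t+1)$ is independent of the chosen $(t-1)$-subset $S$ is exactly the point that makes the reduction claim work. The only caveat worth recording is the implicit nondegeneracy assumption $l\geq t$ (so that $l-t+1>0$), which is standard and harmless here.
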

Tight fusion frames can be formed by coordinate projections from block designs.
\begin{proposition}(\cite{BH1, CHST})\label{tighness}
	Let $\mathcal{B}=\{b_j\}_{j=1}^m$ be an orthonormal basis for $\mathbb{F}^m$ and $\mathbb{S}=\{\mathcal{J}_1, \mathcal{J}_2, \ldots, \mathcal{J}_b\}$ be a $t$-$(m, l, \lambda)$-block design. Then the set of coordinate projections with respect to $\mathcal{B}$, $\{P^{\mathcal{B}}_{\mathcal{J}_1}, P^{\mathcal{B}}_{\mathcal{J}_2}, \ldots, P^{\mathcal{B}}_{\mathcal{J}_b}\}$, forms a tight fusion frame for $\mathbb{F}^m$.
\end{proposition}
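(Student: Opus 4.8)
The plan is to compute the fusion frame operator of the coordinate projections directly and show that it is a positive scalar multiple of $I_m$, which simultaneously establishes positive-definiteness (hence the fusion frame property) and tightness.

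First I would write out the fusion frame operator using the definition of the coordinate projections:
\[
F_P = \sum_{k=1}^{b} P^{\mathcal{B}}_{\mathcal{J}_k} = \sum_{k=1}^{b} \sum_{j \in \mathcal{J}_k} b_j b_j^*.
\]
The key move is to interchange the order of summation, reorganizing the double sum so that it is indexed first by the basis vectors $b_j$ and then by the blocks containing the index $j$. For each fixed $j \in \INDEX{m}$, the coefficient of $b_j b_j^*$ is exactly the number of blocks $\mathcal{J}_k$ that contain $j$.

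At this point I would invoke the defining regularity of a $t$-$(m,l,\lambda)$ block design, namely that each element of $\INDEX{m}$ occurs in exactly $r$ blocks. This collapses the double sum into
\[
F_P = \sum_{j=1}^{m} r \, b_j b_j^* = r \sum_{j=1}^{m} b_j b_j^*,
\]
and since $\{b_j\}_{j=1}^m$ is an orthonormal basis for $\mathbb{F}^m$, the resolution of the identity gives $\sum_{j=1}^m b_j b_j^* = I_m$. Therefore $F_P = r \, I_m$, which is a positive multiple of the identity because $r \geq 1$. This shows that $F_P$ is positive-definite, so the collection is a fusion frame, and that it is a scalar multiple of $I_m$, so the fusion frame is tight.

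There is no serious obstacle here; the argument is essentially immediate once the double sum is reorganized. The only point requiring a moment of care is observing that the replication number $r$ is a fixed positive integer independent of $j$, which is precisely guaranteed by the block-design axioms, and that $r > 0$ ensures both positive-definiteness and that the tight frame constant is nonzero.
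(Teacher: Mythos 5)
Your proposal is correct and is essentially identical to the paper's proof: both compute the fusion frame operator as a double sum, use the replication number $r$ from the block-design axioms to collapse it to $r\sum_{j=1}^m b_jb_j^* = rI_m$, and conclude tightness. The paper's version is just more terse; no substantive difference.
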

\begin{proof}
	Since $\mathbb{S}$ is a $t$-block design, every element $j\in \INDEX{m}$ occurs in exactly $r$ blocks. It follows that
	\[\sum_{j=1}^{b}P^{\mathcal{B}}_{\mathcal{J}_j}=r\sum_{j=1}^{m}b_jb_j^*=rI_m.\]
	This means that $\{P^{\mathcal{B}}_{\mathcal{J}_j}\}_{j=1}^b$ forms a tight fusion frame.
\end{proof}
The main idea of our constructions is using a set of MUBs and a collection of block designs to form a packing of coordinate projections. We are now ready for the main theorem of the constructions.
\begin{theorem}\label{thm_construction}
	Let  $\{\mathcal{B}_k\}_{k\in K}$ be a family of MUBs in $\mathbb{F}^m$, and let $(m, l_1, \lambda_1), \ldots, (m, l_s, \lambda_s)$, $s\leq |K|$ be a family of block designs. Denote $\mathbb{S}_1, \mathbb{S}_2, \ldots, \mathbb{S}_s$ the corresponding sets of blocks. Let $\{A_1, A_2, \ldots, A_s\}$ be any partition of $K$ and for each $i\in \INDEX{s}$, let $\mathcal{P}_i=\{P^{\mathcal{B}_k}_{\mathcal{J}}: k\in A_i, \mathcal{J}\in \mathbb{S}_i\}$ be the family of coordinate projections of rank $l_i$. If for each $i$,  $\mathbb{S}_i$ is $l_i^2/m$-cohesive and $\sum_{i=1}^{s}|\mathbb{S}_i||A_i|>d_{\mathbb{F}^m}+1$, then the union of families $\{\mathcal{P}_i\}_{i=1}^s$ forms a tight optimally spread mixed-rank packing with mixture $s$ for $\mathbb{F}^m$.
\end{theorem}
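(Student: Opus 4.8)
The plan is to reduce the statement to the single pairwise trace estimate highlighted at the start of this section, and then handle tightness and mixture separately. The conclusion has three parts — that the union is a mixed-rank packing of mixture $s$ with more than $d_{\mathbb{F}^m}+1$ elements, that it is optimally spread, and that it is tight — but the heart of the matter is to show that $\tr(PP')\leq \frac{l_il_{i'}}{m}$ for every pair of distinct elements $P,P'$ of respective ranks $l_i,l_{i'}$. Once this is in hand, Identity~\eqref{eqmixed} forces every pairwise inner product of the embedded vectors to be nonpositive; since the number of projections is $n=\sum_{i=1}^s|A_i||\mathbb{S}_i|>d_{\mathbb{F}^m}+1$, the relevant branch of Theorem~\ref{thm2} guarantees that any code on the unit sphere with more than $d_{\mathbb{F}^m}+1$ points has maximal inner product at least $0$. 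Our code therefore attains this common lower bound, solving the unrestricted coding problem, and a fortiori — since it already lies in $\prod_{i}\prod_{j}\mathcal{K}_{l_i}$ — the restricted one, so Definition~\ref{defn1} gives optimal spread. Taking the $l_i$ pairwise distinct yields mixture $s$.

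I would establish the key estimate by a case analysis on whether the two projections come from the same basis. First note that because $\{A_i\}_{i=1}^s$ partitions $K$, each index $k$ lies in a unique $A_i$, so every projection $P^{\mathcal{B}_k}_{\mathcal{J}}$ built from $\mathcal{B}_k$ necessarily uses a block $\mathcal{J}\in\mathbb{S}_i$ of cardinality $l_i$. If two distinct elements share the same basis index $k$, they lie in the same $\mathcal{P}_i$, both have rank $l_i$, and Proposition~\ref{pro3}(1) gives $\tr(P^{\mathcal{B}_k}_{\mathcal{J}}P^{\mathcal{B}_k}_{\mathcal{J}'})=|\mathcal{J}\cap\mathcal{J}'|$, which is at most $l_i^2/m$ by the $l_i^2/m$-cohesiveness of $\mathbb{S}_i$; this matches the threshold $\frac{l_il_i}{m}$ exactly. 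If instead the basis indices differ, $k\neq k'$, then $\mathcal{B}_k$ and $\mathcal{B}_{k'}$ are mutually unbiased, and Proposition~\ref{pro3}(2) gives $\tr(P^{\mathcal{B}_k}_{\mathcal{J}}P^{\mathcal{B}_{k'}}_{\mathcal{J}'})=\frac{|\mathcal{J}||\mathcal{J}'|}{m}=\frac{l_il_{i'}}{m}$, meeting the threshold with equality. In either case the required bound holds.

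For tightness I would invoke Proposition~\ref{tighness} basis by basis: for each fixed $i$ and each $k\in A_i$, the coordinate projections $\{P^{\mathcal{B}_k}_{\mathcal{J}}\}_{\mathcal{J}\in\mathbb{S}_i}$ from the block design $\mathbb{S}_i$ sum to $r_iI_m$, where $r_i$ is its replication number. Summing over $k\in A_i$ and then over $i$ gives the fusion frame operator
\[
F_P=\sum_{i=1}^s\sum_{k\in A_i}\sum_{\mathcal{J}\in\mathbb{S}_i}P^{\mathcal{B}_k}_{\mathcal{J}}=\PARENTH{\sum_{i=1}^s|A_i|\,r_i}I_m,
\]
a positive multiple of $I_m$, so the packing is tight.

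The main obstacle, such as it is, is conceptual rather than computational: one must recognize that the cohesiveness parameter $l_i^2/m$ is calibrated precisely to the orthoplex threshold for same-rank, same-basis pairs, and that the mutually unbiased pairs sit exactly at the threshold (orthogonal embedded vectors), so the whole configuration lands on the boundary permitted by the $n>d_{\mathbb{F}^m}+1$ case of Rankin's bound. A small point to verify is genuine distinctness of the listed projections — so that no pair has embedded inner product $1$ — which follows since distinct blocks over a common basis span distinct subspaces, while for $k\neq k'$ one has $\tr(P^{\mathcal{B}_k}_{\mathcal{J}}P^{\mathcal{B}_{k'}}_{\mathcal{J}'})=\frac{l_il_{i'}}{m}<\min(l_i,l_{i'})$ whenever the ranks are proper.
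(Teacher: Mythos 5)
Your proposal is correct and follows essentially the same route as the paper: the same case analysis via Proposition~\ref{pro3} and cohesiveness to get $\tr(PP')\leq l_il_{i'}/m$ for all distinct pairs, the same appeal to Identity~\eqref{eqmixed} and the $n>d_{\mathbb{F}^m}+1$ branch of Rankin's bound to conclude optimal spread, and the same use of Proposition~\ref{tighness} for tightness. The only differences are cosmetic — you spell out the reduction to the unrestricted coding problem (which the paper states once at the top of Section~\ref{sec6}) and add a harmless distinctness check.
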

\begin{proof}
	For every fixed $i\in \INDEX{s}$, consider any coordinate projections $P_\mathcal{J}^{\mathcal{B}_k},P_{\mathcal{J}'}^{\mathcal{B}_{k'}}$ in $\mathcal{P}_i$. If $k=k'$ then by (1) of Proposition \ref{pro3} and the assumption that $\mathbb{S}_i$ is $l_i^2/m$-cohesive, we get
	$$\tr(P^{(k)}_\mathcal{J}P^{(k')}_{\mathcal{J}'})=|\mathcal{J}\cap \mathcal{J}'|\leq l_i^2/m.$$  If $k\not=k'$, then by (2) of Proposition \ref{pro3}, we have 
	$$\tr(P^{(k)}_\mathcal{J}P^{(k')}_{\mathcal{J}'})=l_i^2/m.$$
	Thus, for each $i$, the family of coordinate projections of rank $l_i$, $\mathcal{P}_i=\{P^{\mathcal{B}_k}_{\mathcal{J}}: k\in A_i, \mathcal{J}\in \mathbb{S}_i\}$ forms a packing of $|\mathbb{S}_i||A_i|$ elements and the trace of the product of any two projections is at most $l_i^2/m$. 
	
	Now consider any $P\in \mathcal{P}_i$ and $Q\in \mathcal{P}_j$ where $i\not=j$. Then again,  by (2) of Proposition \ref{pro3},  $\tr(PQ)=\dfrac{l_il_j}{m}$. Moreover, by assumption, the number of elements of the union is greater than $d_{\mathbb{F}^m}+1$. Therefore, this family forms an optimally spread mixed-rank packing with a mixture of $s$. The tightness is clear by Proposition \ref{tighness}.
\end{proof}

Symmetric block designs are a special type of block design and are well-studied objects in design theory, for example, see \cite{CL, CD}. In \cite{CHST}, the authors exploited their nice properties to construct optimally spread packings of constant-rank. In this paper, we will continue to use them for our constructions.
\begin{definition}
	A 2-$(m, l, \lambda)$ block design is symmetric if $m=b$, or equivalently $l=r$.
\end{definition}
A very useful property of symmetric block designs is that the pairwise block intersections have the same cardinality.
\begin{theorem} [\cite{CL}]
	For a $(m, l, \lambda)$ symmetric block design $\mathbb{S}$, every $\mathcal{J}, \mathcal{J}'\in \mathbb{S}, \mathcal{J}\not=\mathcal{J}'$ satisfies 
	\[|\mathcal{J}\cap \mathcal{J}'|=\lambda.\]
\end{theorem}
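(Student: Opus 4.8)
The plan is to encode the design in its incidence matrix and exploit the symmetry $m=b$, $l=r$ to transfer a ``point-side'' counting identity to the ``block-side''. Let $N$ be the $m\times b$ incidence matrix of $\mathbb{S}$, with rows indexed by the points $\INDEX{m}$ and columns by the blocks $\mathcal{J}_1,\dots,\mathcal{J}_b$, so that $N_{ij}=1$ precisely when the point $i$ lies in the block $\mathcal{J}_j$. The off-diagonal entries of $N^\top N$ are exactly the pairwise block intersection sizes $|\mathcal{J}_j\cap\mathcal{J}_{j'}|$, so the theorem amounts to showing $N^\top N=(r-\lambda)I+\lambda J$, where $J$ denotes the all-ones matrix of the appropriate size.

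First I would record the point-side identity $NN^\top=(r-\lambda)I+\lambda J$: its $(i,i)$ entry counts the blocks through point $i$, which is $r$, while its $(i,i')$ entry for $i\neq i'$ counts the blocks containing the pair $\{i,i'\}$, which is $\lambda$ by the $2$-design property. I would also record the row- and column-sum relations $N\mathbf{1}=r\mathbf{1}$ and $N^\top\mathbf{1}=l\mathbf{1}$, which in the symmetric case $m=b$, $l=r$ upgrade to the matrix commutation identities $NJ=JN=rJ$.

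Next I would establish that $N$ is invertible. The matrix $(r-\lambda)I+\lambda J$ has eigenvalue $r+\lambda(m-1)=rl$ on the all-ones vector and eigenvalue $r-\lambda$ on its orthogonal complement, so its determinant is $rl(r-\lambda)^{m-1}$, which is nonzero for any proper (incomplete) design, where $r>\lambda$. Because $m=b$, the matrix $N$ is square, so $\det(NN^\top)=\det(N)^2\neq 0$ forces $N$ to be invertible.

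Finally I would transfer the identity from $NN^\top$ to $N^\top N$. From $NJ=rJ$ and invertibility we get $N^{-1}J=\tfrac{1}{r}J$, hence $N^{-1}JN=\tfrac{1}{r}JN=J$ using $JN=rJ$. Writing $N^\top=N^{-1}(NN^\top)$ and conjugating,
\[
N^\top N=N^{-1}\bigl((r-\lambda)I+\lambda J\bigr)N=(r-\lambda)I+\lambda\,N^{-1}JN=(r-\lambda)I+\lambda J.
\]
Reading off the off-diagonal entries yields $|\mathcal{J}\cap\mathcal{J}'|=\lambda$ for all distinct $\mathcal{J},\mathcal{J}'\in\mathbb{S}$, as claimed. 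The hard part will be the invertibility step together with the commutation relation $NJ=JN$: both crucially use the symmetry hypotheses $m=b$ and $l=r$, and without them $N$ need not be square and the passage from the point-side count to the block-side count fails.
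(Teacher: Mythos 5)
Your proof is correct, and it is the standard incidence-matrix argument for this classical fact; the paper itself offers no proof, citing it directly from the Cameron--van Lint reference \cite{CL}, where essentially this same argument appears. All the steps check out: the identity $NN^\top=(r-\lambda)I+\lambda J$ from the $2$-design property, the determinant computation $\det(NN^\top)=rl(r-\lambda)^{m-1}\neq 0$ (using $r(l-1)=\lambda(m-1)$ and the implicit nondegeneracy assumption $l<m$, which gives $r>\lambda$), the invertibility of the square matrix $N$, and the conjugation $N^\top N=N^{-1}(NN^\top)N$ together with $N^{-1}JN=J$. You correctly isolate where symmetry enters: $m=b$ makes $N$ square so that $\det(NN^\top)=\det(N)^2$, and $l=r$ gives $NJ=JN$, without which the block-side Gram matrix cannot be recovered from the point-side one.
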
 
Moreover, it has been shown in \cite{CHST} that every symmetric block design has cohesive property.
\begin{proposition} \label{cohesive}
	Every  $(m, l, \lambda)$ symmetric block design is $l^2/m$-cohesive.
\end{proposition}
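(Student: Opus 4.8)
The plan is to prove that every $(m, l, \lambda)$ symmetric block design is $l^2/m$-cohesive by combining the two facts about symmetric designs stated just before the proposition. Recall that being $c$-cohesive means that all blocks share a common cardinality (which is automatic here, since every block has cardinality $l$) and that $\max_{\mathcal{J} \neq \mathcal{J}'} |\mathcal{J} \cap \mathcal{J}'| \le c$. By the preceding theorem of \cite{CL}, for a symmetric design \emph{every} pair of distinct blocks satisfies $|\mathcal{J} \cap \mathcal{J}'| = \lambda$ exactly, so the maximum pairwise intersection is precisely $\lambda$. Hence it suffices to verify the single inequality $\lambda \le l^2/m$.

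First I would reduce the claim to proving $m\lambda \le l^2$. To establish this, I would invoke the two identities for block designs recorded earlier in the excerpt: $mr = bl$ and, since $t=2$, the relation $r(l-1) = \lambda(m-1)$. For a symmetric design we additionally have $m = b$ (equivalently $l = r$), so the first identity gives $m l = b l = m r$, confirming $r = l$. Substituting $r = l$ into the second identity yields $l(l-1) = \lambda(m-1)$, which is the key algebraic relation I would exploit.

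The remaining step is purely algebraic: from $l(l-1) = \lambda(m-1)$ I want to deduce $\lambda m \le l^2$. Rewriting, $l^2 - l = \lambda m - \lambda$, so $l^2 - \lambda m = l - \lambda$. Thus $l^2 - \lambda m = l - \lambda \ge 0$ precisely when $l \ge \lambda$, which holds for any nontrivial design (a block of size $l$ cannot contain a fixed $2$-subset more times than $l$ allows; more simply, $\lambda \le r = l$ always). Therefore $l^2 \ge \lambda m$, i.e. $\lambda \le l^2/m$, and combined with the exact intersection value $\lambda$ this gives $\max_{\mathcal{J}\neq\mathcal{J}'}|\mathcal{J}\cap\mathcal{J}'| = \lambda \le l^2/m$, establishing $l^2/m$-cohesiveness with $c = l^2/m$.

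I do not anticipate a serious obstacle here, since the result follows from standard design-theoretic identities; the only point requiring mild care is justifying $\lambda \le l$ (equivalently $l \le r$), which I would handle by noting $\lambda \le r$ holds for any block design because the number of blocks through a fixed pair cannot exceed the number through a single point, together with $r = l$ for the symmetric case. An alternative, cleaner route would be to observe that the identity $l(l-1) = \lambda(m-1)$ can be rearranged directly to $l^2/m - \lambda = (l-\lambda)/m \ge 0$, bypassing any case analysis. Either way the verification is short, so the main content of the proof is simply assembling the cited intersection theorem with the counting identities.
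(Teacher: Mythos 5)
Your proof is correct: the paper itself gives no proof here (it simply cites \cite{CHST}), but your argument is exactly the one the paper implicitly relies on and even restates later, where it notes that for a symmetric design every pairwise intersection equals $\lambda=\frac{l(l-1)}{m-1}<\frac{l^2}{m}$. Your justification that $\lambda\le l$ (via $r=l$ and $l\le m$) closes the one step that needs checking, so nothing is missing.
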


We also need another property of symmetric block designs. 
\begin{proposition} [\cite{CD}]
	The complement of a  $(m, l, \lambda)$ symmetric design $\mathbb{S}$ is a $(m, m-l, m-2l+\lambda)$ symmetric design $\mathbb{S}^c$, whose blocks are the complements of blocks of $\mathbb{S}$.
\end{proposition}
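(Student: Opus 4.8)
The plan is to verify directly that the complement family
$\mathbb{S}^c = \{\mathcal{J}^c := \INDEX{m}\setminus \mathcal{J} : \mathcal{J} \in \mathbb{S}\}$
satisfies the defining axioms of a symmetric $(m, m-l, m-2l+\lambda)$ design. The key enabling fact, which I would record first, is that a symmetric design has exactly $b = m$ blocks, so the incidence relation $mr = bl$ forces the replication number to equal the block size, $r = l$. Consequently each complement block $\mathcal{J}^c$ has cardinality $m - l$, and there are exactly $m$ of them.

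The core step is a counting argument. Fixing a pair of distinct points $x, y \in \INDEX{m}$, a complement block $\mathcal{J}^c$ contains both $x$ and $y$ precisely when the original block $\mathcal{J}$ contains neither. By inclusion--exclusion over the $m$ blocks of $\mathbb{S}$, using that $x$ and $y$ each lie in exactly $r = l$ blocks while both lie jointly in exactly $\lambda$ blocks, the number of blocks avoiding both is
\[
m - (l + l - \lambda) = m - 2l + \lambda.
\]
This shows every pair of points lies in exactly $m - 2l + \lambda$ complement blocks, so $\mathbb{S}^c$ is a $2$-design with $\lambda^c = m - 2l + \lambda$.

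Finally I would confirm that $\mathbb{S}^c$ is itself symmetric. Its replication number is the number of complement blocks through a fixed point $x$, which equals the number of original blocks missing $x$, namely $m - r = m - l$; this agrees with the complement block size $m-l$, and since $\mathbb{S}^c$ again has $m = b$ blocks, it is symmetric. The relation $r^c(l^c - 1) = \lambda^c(m-1)$ is then automatic, collapsing to the original identity $l(l-1) = \lambda(m-1)$. There is no genuine obstacle in this argument; the only point requiring attention is the use of $r = l$, the structural hallmark of symmetric designs that makes the complement parameters emerge cleanly.
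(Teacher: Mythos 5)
Your proposal is correct. The paper does not prove this proposition at all---it is quoted as a known fact from the cited Handbook of Combinatorial Designs---so there is no in-paper argument to compare against; your direct verification (using $r=l$ for symmetric designs, the inclusion--exclusion count $m-(2l-\lambda)$ for blocks avoiding a fixed pair, and the check that the complement replication number $m-r=m-l$ matches the complement block size) is precisely the standard textbook proof and is complete.
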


The following theorem will consider a special case where all block designs are symmetric.

\begin{theorem}\label{thmsymmetric}
	Let  $\{\mathcal{B}_k\}_{k\in K}$ be a family of MUBs and let $(m, l_1, \lambda_1), \ldots, (m, l_s, \lambda_s)$, $s\leq |K|$ be a family of symmetric block designs. Denote $\mathbb{S}_1, \mathbb{S}_2, \ldots, \mathbb{S}_s$ the corresponding sets of blocks. Let $\{A_1, A_2, \ldots, A_s\}$ be any partition of $K$ and let $\mathcal{P}_i=\{P^{\mathcal{B}_k}_{\mathcal{J}}: k\in A_i, \mathcal{J}\in \mathbb{S}_i\}$ be the family of coordinate projections of rank $l_i$. If $m\sum_{i=1}^{s}|A_i|>d_{\mathbb{F}^m}+1$, then the union of families $\{\mathcal{P}_i\}_{i=1}^s$ forms a tight optimally spread mixed-rank packing with a mixture of $s$ for $\mathbb{F}^m$.
\end{theorem}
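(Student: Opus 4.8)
The plan is to deduce this result directly from the general construction in Theorem \ref{thm_construction}, of which it is the special case where all the block designs are symmetric. Recall that Theorem \ref{thm_construction} imposes two hypotheses on the chosen block designs: that each $\mathbb{S}_i$ be $l_i^2/m$-cohesive, and that $\sum_{i=1}^{s}|\mathbb{S}_i||A_i|>d_{\mathbb{F}^m}+1$. Thus my task reduces to checking that both hypotheses follow automatically from the symmetry of the designs together with the single numerical condition $m\sum_{i=1}^{s}|A_i|>d_{\mathbb{F}^m}+1$ assumed here.

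First, I would dispense with the cohesiveness hypothesis by invoking Proposition \ref{cohesive}, which states that every $(m,l,\lambda)$ symmetric block design is $l^2/m$-cohesive. Since each $\mathbb{S}_i$ is a symmetric $(m,l_i,\lambda_i)$ design, it is therefore $l_i^2/m$-cohesive with no further computation; this is precisely the cohesiveness requirement of Theorem \ref{thm_construction}.

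Second, I would translate the cardinality requirement. By the defining property of a symmetric block design, $m=b$, where $b=|\mathbb{S}_i|$ counts the blocks; hence $|\mathbb{S}_i|=m$ for every $i$. Consequently the quantity controlling the orthoplex regime in Theorem \ref{thm_construction} simplifies to
\[
\sum_{i=1}^{s}|\mathbb{S}_i||A_i|=m\sum_{i=1}^{s}|A_i|,
\]
so the assumed inequality $m\sum_{i=1}^{s}|A_i|>d_{\mathbb{F}^m}+1$ is exactly the hypothesis needed.

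With both hypotheses of Theorem \ref{thm_construction} verified, its conclusion transfers verbatim: the union $\{\mathcal{P}_i\}_{i=1}^{s}$ forms a tight optimally spread mixed-rank packing with mixture $s$ for $\mathbb{F}^m$. Because every step is a direct appeal to an already-established statement, I anticipate no genuine obstacle; the only point demanding any care is the bookkeeping identity $|\mathbb{S}_i|=m$ for symmetric designs, which is what makes the reduced numerical hypothesis line up precisely with the general one.
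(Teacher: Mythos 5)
Your proposal is correct and follows exactly the paper's own argument: invoke Proposition \ref{cohesive} for the $l_i^2/m$-cohesiveness of each symmetric design, note that $|\mathbb{S}_i|=m$ so the cardinality hypothesis of Theorem \ref{thm_construction} becomes $m\sum_{i=1}^{s}|A_i|>d_{\mathbb{F}^m}+1$, and then apply that theorem. No differences of substance.
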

\begin{proof}
	For each $i\in \INDEX{s}$, by Proposition \ref{cohesive},  $\mathbb{S}_i$ is $l_i^2/m$-cohesive. Note that all the sets $\mathbb{S}_i$ have the same cardinality, $m$. The conclusion then follows by Theorem \ref{thm_construction}. 
\end{proof}
\begin{example}{Example}
	\begin{enumerate}
		\item  Let $\{\mathcal{B}_1, \ldots, \mathcal{B}_9\}$ be 9 MUBs in $\mathbb{R}^{16}$ and let $\mathbb{S}$ be a $(16, 6, 2)$ symmetric  block design, which exists, see \cite{CD}. Let $\{P^{\mathcal{B}_1}_i\}_{i=1}^{16}$ be 16 projections on lines, each spanned by a vector in $\mathcal{B}_1$ and let $\{P^{\mathcal{B}_k}_\mathcal{J} :  \mathcal{J}\in \mathbb{S}\}$ be the collection of coordinate projections respective to $\mathcal{B}_k$, for $k=2, \ldots, 9.$ Then the family
		\[\{P^{\mathcal{B}_1}_i\}_{i=1}^{16} \cup \{P^{\mathcal{B}_k}_\mathcal{J} :  \mathcal{J}\in \mathbb{S}\}_{k=2}^9\] forms a tight optimally spread mixed-rank packing in $\mathbb{R}^{16}$ with a mixture of 2. This family has 16 rank 1 projections and 128 rank 6 projections.
		\item	Let $\mathbb{S}_1$ and  $\mathbb{S}_2$ be symmetric block designs with parameters $(71, 15, 3)$ and $(71, 21, 6)$, respectively,  see \cite{CD}. Since 71 is a prime, there are 72 MUBs in $\mathbb{C}^{71}.$ Let $\{A_1, A_2\}$ be any partition of \INDEX{72}. Then the family
		$$\cup_{i=1}^2\{P^{\mathcal{B}_k}_{\mathcal{J}}: k\in A_i, \mathcal{J}\in \mathbb{S}_i\}$$ forms a tight optimally spread mixed-rank packing in $\mathbb{C}^{71}$.  It has $71\times72$ elements, namely $71|A_1|$ elements of rank 15 and $71|A_2|$ elements of rank 21. 
	\end{enumerate}
\end{example}	
As a consequence of Theorem \ref{thmsymmetric} and noting that maximal sets of MUBs exist by Theorem \ref{maximal MUBs}, we get:
\begin{corollary}\label{sym2}
	Let $\mathbb{S}$ be a symmetric block design and $\mathbb{S}^c$ be its complement. 
	\begin{enumerate}
		\item 	If $m$ is a power of 4, then for any partition $\{A_1, A_2\}$ of $\INDEX{m/2+1}$, the family 
		$$\{P^{\mathcal{B}_k}_{\mathcal{J}}: k\in A_1, \mathcal{J}\in \mathbb{S}\}\cup \{P^{\mathcal{B}_k}_{\mathcal{J}}: k\in A_2, \mathcal{J}\in\mathbb{S}^c\}$$ forms a tight optimally spread mixed-rank packing with a mixture of 2 in $\mathbb{R}^m$, where $\{\mathcal{B}_k\}_{k=1}^{m/2+1}$ is a maximal set of MUBs in $\mathbb{R}^m$.
		\item 	If $m$ is a prime power, then for any partition $\{A_1, A_2\}$ of $\INDEX{m+1}$, the family 
		$$\{P^{\mathcal{B}_k}_{\mathcal{J}}: k\in A_1, \mathcal{J}\in \mathbb{S}\}\cup \{P^{\mathcal{B}_k}_{\mathcal{J}}: k\in A_2, \mathcal{J}\in\mathbb{S}^c\}$$ forms a tight optimally spread mixed-rank packing with a mixture of 2 in $\mathbb{C}^m$, where $\{\mathcal{B}_k\}_{k=1}^{m+1}$ is a maximal set of MUBs in $\mathbb{C}^m$.
	\end{enumerate}
	
\end{corollary}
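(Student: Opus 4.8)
The plan is to recognize both statements as the special case $s=2$ of Theorem~\ref{thmsymmetric}, so that the work reduces to checking its hypotheses. First I would invoke Theorem~\ref{maximal MUBs} to produce the required maximal family of MUBs: a set $\{\mathcal{B}_k\}_{k=1}^{m/2+1}$ in $\mathbb{R}^m$ when $m$ is a power of $4$, and a set $\{\mathcal{B}_k\}_{k=1}^{m+1}$ in $\mathbb{C}^m$ when $m$ is a prime power, giving $|K|=m/2+1$ and $|K|=m+1$ respectively (in particular $s=2\le|K|$). Next I would take the two symmetric designs to be $\mathbb{S}_1:=\mathbb{S}$ and $\mathbb{S}_2:=\mathbb{S}^c$. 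By the proposition on complements of symmetric designs, $\mathbb{S}^c$ is again a symmetric $(m,m-l,m-2l+\lambda)$ design, so both $\mathbb{S}_1$ and $\mathbb{S}_2$ are symmetric block designs (each on $m$ blocks), and for the given partition $\{A_1,A_2\}$ of $K$ the families $\mathcal{P}_1$ and $\mathcal{P}_2$ of Theorem~\ref{thmsymmetric} are exactly the two pieces appearing in the stated union.

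With the setup in place, the only remaining hypothesis of Theorem~\ref{thmsymmetric} is the cardinality inequality $m\sum_{i=1}^{2}|A_i|>d_{\mathbb{F}^m}+1$. Since $\{A_1,A_2\}$ partitions $K$, we have $\sum_{i=1}^{2}|A_i|=|K|$, so this is simply $m|K|>d_{\mathbb{F}^m}+1$, which I would verify by direct substitution. In the real case $m|K|=m(m/2+1)=\tfrac{m^2}{2}+m$ while $d_{\mathbb{R}^m}+1=\tfrac{m(m+1)}{2}$, so the gap is $\tfrac{m}{2}>0$; in the complex case $m|K|=m(m+1)=m^2+m$ while $d_{\mathbb{C}^m}+1=m^2$, so the gap is $m>0$. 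In both cases the inequality is strict, so Theorem~\ref{thmsymmetric} applies and produces a tight, optimally spread mixed-rank packing.

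Finally I would confirm the mixture is exactly $2$. This holds once both $A_1$ and $A_2$ are nonempty, so that both ranks actually occur, and once $l\neq m-l$, i.e. $l\neq m/2$; here I would note that the symmetric-design relation $\lambda(m-1)=l(l-1)$ forces $l=m/2$ to be degenerate, so $\mathbb{S}$ and $\mathbb{S}^c$ genuinely carry the distinct ranks $l$ and $m-l$. I do not expect a real obstacle: the corollary is a direct specialization of Theorem~\ref{thmsymmetric}, and its entire content is the short arithmetic verification of the cardinality bound, which the maximality of the MUB family makes succeed with room to spare ($\tfrac{m}{2}$ and $m$, respectively).
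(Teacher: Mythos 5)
Your proposal is correct and follows exactly the route the paper intends: the paper states this corollary as an immediate consequence of Theorem~\ref{thmsymmetric} together with the existence of maximal MUB families from Theorem~\ref{maximal MUBs}, and your arithmetic check of $m|K|>d_{\mathbb{F}^m}+1$ (gaps $m/2$ and $m$) is precisely the content being elided. Your added remarks that $\mathbb{S}^c$ is again symmetric and that $l\neq m/2$ for a genuine symmetric $2$-design are correct refinements the paper leaves implicit.
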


Besides symmetric block designs, affine designs are also very useful for our constructions.
\begin{definition}
	A  $2$-$(m, l, \lambda)$ block design is resolvable if its collection of blocks $\mathbb{S}$ can be partitioned into subsets, called parallel classes, such that:
	\begin{enumerate}
		\item the blocks within each class are disjoint, and
		\item for each parallel class, every element of $\INDEX{m}$ is contained in a block.
	\end{enumerate}
	Moreover, if the number of elements occurring in the intersection between blocks from different parallel classes is constant, then it is called an affine design.
\end{definition}
For any $2$-$(m, l, \lambda)$ resolvable block design, Bose's condition gives a lower bound for the number of blocks $b$.
\begin{theorem} (\cite{CD})
	Given any $2$-$(m, l, \lambda)$ resolvable block design, the number of blocks is bounded by the other parameters:
	\[b\geq m+r-1,\]
	and this lower bound is achieved if and only if the design is a $l^2/m$-cohesive affine design.
\end{theorem}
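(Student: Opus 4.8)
The statement is the classical Bose inequality together with its equality characterization, so the plan is to argue entirely through the $m \times b$ point--block incidence matrix $N$ (rows indexed by $\INDEX{m}$, columns by the blocks, with $N_{p,i}=1$ iff $p\in\mathcal J_i$). First I would record two standard facts: the relation $r(l-1)=\lambda(m-1)$ forces $r>\lambda$ in the nondegenerate range $1<l<m$, and $NN^{T}=(r-\lambda)I_m+\lambda J_m$ (with $J_m$ the all-ones matrix) has eigenvalues $rl$ once and $r-\lambda$ with multiplicity $m-1$; hence $\rank N=m$. This already yields Fisher's bound $b\ge m$, and the role of resolvability is to sharpen it by exhibiting kernel vectors. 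Let $u_1,\dots,u_r\in\mathbb{R}^{b}$ be the indicator vectors of the $r$ parallel classes. Since each class partitions the $m$ points, $Nu_h=\mathbf 1_m$ for every $h$, so each difference $u_h-u_{h'}$ lies in $\ker N$. The $u_h$ have disjoint supports, hence are linearly independent, so $\{u_h-u_1\}_{h=2}^{r}$ spans an $(r-1)$-dimensional subspace $U_0\subseteq\ker N$; rank--nullity then gives $b=\rank N+\dim\ker N\ge m+(r-1)$.

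For the equality characterization I would pass to $M:=N^{T}N$, whose entries are the intersection numbers $|\mathcal J_i\cap\mathcal J_j|$ and whose spectrum, coming from $NN^{T}$, is $\{rl,\,(r-\lambda)^{m-1},\,0^{\,b-m}\}$. Let $U=\spn\{u_1,\dots,u_r\}$. Because each parallel class partitions the points, a one-line computation gives $Mu_h=l\mathbf 1_b$ for every $h$, so $U$ is $M$-invariant (hence so is $U^{\perp}$, by symmetry), and $M$ acts on $U$ as $rl$ on $\mathbf 1_b$ and as $0$ on $U_0$. The equality $b=m+r-1$ means $\dim\ker M=b-m=r-1=\dim U_0$, forcing $\ker M=U_0$ exactly; consequently the eigenvalue $r-\lambda$ occupies all of $U^{\perp}$, i.e. $M|_{U^{\perp}}=(r-\lambda)I$.

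I would then test this identity against the projected basis vectors $\Pi e_i:=e_i-\tfrac1s u_{h(i)}\in U^{\perp}$, where $s=m/l$ is the number of blocks per class and $h(i)$ the class of block $i$. Using $Mu_{h}=l\mathbf 1_b$ gives $Me_i=(r-\lambda)\Pi e_i+\tfrac{l}{s}\mathbf 1_b$; reading the $j$-th coordinate for $j$ in a class different from $i$ yields $|\mathcal J_i\cap\mathcal J_j|=l/s=l^{2}/m$ for every cross-class pair, which is exactly the affine property, and (since within-class blocks are disjoint) makes the design $l^{2}/m$-cohesive, while reading a same-class coordinate forces $l=r-\lambda$, consistent with the bound. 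For the converse I would start from affineness, write $M=lI_b+\tfrac{l}{s}(J_b-PP^{T})$ with $P$ the $b\times r$ block--class incidence matrix, compute its spectrum $\{rl,\,l^{\,b-r},\,0^{\,r-1}\}$, and match it against $\{rl,\,(r-\lambda)^{m-1},\,0^{\,b-m}\}$; equating multiplicities gives $b-r=m-1$, i.e. $b=m+r-1$.

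The main obstacle is the forward half of the equality claim: deducing from the purely numerical coincidence $b=m+r-1$ that \emph{all} cross-class intersections coincide. The key that unlocks it is that at equality the kernel of $M$ is \emph{exactly} the class-difference space $U_0$, which pins $M|_{U^{\perp}}$ down to the scalar $(r-\lambda)I$; once that is in hand the intersection numbers drop out of the single coordinate computation above, with no delicate design-theoretic counting required.
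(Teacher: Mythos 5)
The paper does not prove this statement at all: it is quoted verbatim from the Handbook of Combinatorial Designs \cite{CD} as a known result (Bose's condition), so there is no in-paper argument to compare yours against. Judged on its own, your proof is correct and complete. The inequality half is the standard argument: $NN^{T}=(r-\lambda)I_m+\lambda J_m$ is nonsingular for nondegenerate parameters, so $\rank N=m$, and the $r-1$ independent class-difference vectors $u_h-u_1$ in $\ker N$ give $b\ge m+r-1$ by rank--nullity. The equality half via the spectrum of $M=N^{T}N$ is also sound: the one point you should make explicit is that $\sum_{h=1}^{r}u_h=\mathbf 1_b$ (every block lies in exactly one class), which is what makes $U=\spn\{u_1,\dots,u_r\}$ genuinely $M$-invariant with $M|_U$ having spectrum $\{rl,0^{r-1}\}$; combined with $r>\lambda$ and $rl\neq r-\lambda$ (both automatic when $1<l<m$ and $\lambda\ge 1$), the multiset bookkeeping then forces $M|_{U^{\perp}}=(r-\lambda)I$ exactly when $b=m+r-1$, and your coordinate computation with $e_i-\tfrac1s u_{h(i)}$ correctly extracts $|\mathcal J_i\cap\mathcal J_j|=l^2/m$ for cross-class pairs (and $0$ within a class, giving cohesiveness). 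The converse direction, matching the spectrum $\{rl,\,l^{\,b-r},\,0^{\,r-1}\}$ of the affine $M$ against $\{rl,\,(r-\lambda)^{m-1},\,0^{\,b-m}\}$, is likewise valid since $l\neq 0$ pins down the multiplicity of the zero eigenvalue. This is essentially the classical proof one finds in \cite{CD} and in Cameron--van Lint \cite{CL}, so you have supplied a correct proof of a result the paper merely cites.
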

\begin{proposition}\label{affine}
	If $\mathbb{S}$ is a $l^2/m$-cohesive $(m, l, \lambda)$ affine design, then its complement $\mathbb{S}^c$ is $(m-l)^2/m$-cohesive.
\end{proposition}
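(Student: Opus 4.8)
The plan is to reduce the claim to a single inclusion--exclusion identity relating the pairwise block intersections of $\mathbb{S}^c$ to those of $\mathbb{S}$, and then to invoke the $l^2/m$-cohesive hypothesis directly. Note first that the conclusion is even well-posed: since every block of $\mathbb{S}$ has cardinality $l$, every block $\mathcal{J}^c := \INDEX{m}\setminus \mathcal{J}$ of $\mathbb{S}^c$ has the common cardinality $m-l$, so the notion of cohesiveness applies to $\mathbb{S}^c$.

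The key step I would record is the following exact identity. For any two blocks $\mathcal{J}, \mathcal{J}' \in \mathbb{S}$, De Morgan's law gives $\mathcal{J}^c \cap \mathcal{J}'^c = \INDEX{m}\setminus(\mathcal{J}\cup\mathcal{J}')$, so that
\[
|\mathcal{J}^c \cap \mathcal{J}'^c| = m - |\mathcal{J}\cup\mathcal{J}'| = m - \bigl(|\mathcal{J}| + |\mathcal{J}'| - |\mathcal{J}\cap\mathcal{J}'|\bigr) = m - 2l + |\mathcal{J}\cap\mathcal{J}'|.
\]
This holds termwise for each pair, precisely because all blocks share the single size $l$. Since complementation $\mathcal{J}\mapsto\mathcal{J}^c$ is a bijection from $\mathbb{S}$ onto $\mathbb{S}^c$ carrying distinct blocks to distinct blocks, maximizing over distinct pairs of $\mathbb{S}^c$ is the same as maximizing the right-hand side over distinct pairs of $\mathbb{S}$. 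Taking the maximum and applying the assumption that $\mathbb{S}$ is $l^2/m$-cohesive then yields
\[
\max_{\mathcal{J}\neq\mathcal{J}'} |\mathcal{J}^c \cap \mathcal{J}'^c| = m - 2l + \max_{\mathcal{J}\neq\mathcal{J}'} |\mathcal{J}\cap\mathcal{J}'| \le m - 2l + \frac{l^2}{m} = \frac{(m-l)^2}{m},
\]
which is exactly the statement that $\mathbb{S}^c$ is $(m-l)^2/m$-cohesive.

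I do not expect a genuine obstacle here: the whole argument is the two-line computation above, and the upper bound itself requires neither resolvability nor the affine structure of $\mathbb{S}$ — only that the blocks have constant size $l$ and that $\mathbb{S}$ is $l^2/m$-cohesive. The affine hypothesis earns its place for a different, sharpness-related reason: via Bose's condition and the fact that blocks from distinct parallel classes meet in the constant number $l^2/m$, the original cohesive bound is genuinely attained, so the complementary bound $(m-l)^2/m$ is sharp and the rank-$(m-l)$ coordinate projections produced from $\mathbb{S}^c$ satisfy the cohesiveness needed to feed into Theorem~\ref{thm_construction}. The only bookkeeping point worth stating carefully is the bijectivity of complementation on blocks, which justifies transferring the maximum.
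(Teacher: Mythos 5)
Your proof is correct, and it is a mild generalization of the paper's argument rather than a different one: both rest on the same inclusion--exclusion identity $|\mathcal{J}^c \cap \mathcal{J}'^c| = m - 2l + |\mathcal{J}\cap\mathcal{J}'|$. The difference is that the paper splits into two cases using the affine structure --- blocks in the same parallel class are disjoint, giving $|\mathcal{J}^c\cap\mathcal{J}'^c| = m-2l < (m-l)^2/m$, while blocks in different parallel classes meet in exactly $l^2/m$ points, giving $|\mathcal{J}^c\cap\mathcal{J}'^c| = (m-l)^2/m$ exactly --- whereas you bypass the case analysis entirely by observing that the identity is monotone in $|\mathcal{J}\cap\mathcal{J}'|$ and that only the cohesive upper bound is needed. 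Your route buys generality (the statement holds for any $l^2/m$-cohesive family of $l$-subsets, affine or not) and your remark about where the affine hypothesis actually earns its keep is accurate; the paper's route buys the exact intersection values, which show in passing that the $(m-l)^2/m$ bound is attained and hence sharp. Your bookkeeping points (constant block size $m-l$ for $\mathbb{S}^c$, bijectivity of complementation) are the right ones to flag; the only caveat neither you nor the paper addresses is the degenerate possibility of repeated blocks, which is harmless here.
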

\begin{proof}
	Let any $\mathcal{J}_1, \mathcal{J}_2\in \mathbb{S}$. If they are in the same parallel class, then $\mathcal{J}_1\cap \mathcal{J}_2=\emptyset$. Hence
	\[|\mathcal{J}^c_1\cap \mathcal{J}^c_2 |=|\INDEX{m}\setminus (\mathcal{J}_1\cup \mathcal{J}_2)|=m-2l<\dfrac{(m-l)^2}{m}.\]
	If $\mathcal{J}_1$ and $\mathcal{J}_2$ are in different parallel classes, then by definition, 
	$|\mathcal{J}_1\cap \mathcal{J}_2|=\dfrac{l^2}{m}$. Therefore, 
	$|\mathcal{J}_1\cup \mathcal{J}_2|=2l-l^2/m$ and so
	\[|\mathcal{J}^c_1\cap \mathcal{J}^c_2 |=m-2l+\dfrac{l^2}{m}=\dfrac{(m-l)^2}{m}.\]
	This completes the proof.
\end{proof}
In the following, we will give some infinite families of optimally spread mixed-rank packings. Before giving such examples, we recall some block designs from \cite{CD}.
\begin{itemize}
	\item Affine designs with parameters
	$$(m, l, \lambda)=\left(p^{t+1}, p^t, \frac{p^{t}-1}{p-1}\right), \ t\geq 1$$ exist if $p$ is a prime power.
	
	\item Menon symmetric designs with parameters $(m, l,\lambda)=(4t^2, 2t^2-t, t^2-t), t\geq 1$. These designs exist for instance, when Hadamard matrices of order $2t$ exist, and it is conjectured that they exist for all
	values of $t$.

	\item Hadamard symmetric designs with parameters $(m, l, \lambda)=(4t-1, 2t-1, t-1), t\geq 1$. These designs exist if and only if Hadamard matrices of order $4t$ exist.
	
\end{itemize} 			
\begin{example}{Example}
	Let $\mathbb{S}$ be an affine design with parameters $\left(p^{t+1}, p^t, \frac{p^{t}-1}{p-1}\right)$, where $p$ is prime power. Note that we can verify the values of the remaining parameters 
	\[r=\sum_{i=0}^{t}p^i, \mbox{ and } b=\sum_{i=1}^{t+1}p^i.\]
	According to Bose's condition, this design is $p^{t-1}$-cohesive. Let $\mathbb{S}^c$ be its complement. By Proposition \ref{affine}, it is $p^{t-1}(p-1)^2$-cohesive. Since $p^{t+1}$ is a prime power, a maximal set of MUBs, $\{\mathcal{B}_k\}_{k=1}^{p^{t+1}+1}$, for $\mathbb{C}^{p^{t+1}} $ exists. By Theorem \ref{thm_construction}, for any partition $\{A_1, A_2\}$ of $\INDEX{p^{t+1}+1}$, the family 
	$$\{P^{\mathcal{B}_k}_{\mathcal{J}}: k\in A_1, \mathcal{J}\in \mathbb{S}\}\cup \{P^{\mathcal{B}_k}_{\mathcal{J}}: k\in A_2, \mathcal{J}\in\mathbb{S}^c\}$$ forms a tight optimally spread mixed-rank packing with a mixture of 2 in $\mathbb{C}^{p^{t+1}}.$		
\end{example}	
\begin{example}{Example}
	Let $\mathbb{S}_1$ be an affine design with parameters
	$\left(4^{t+1}, 4^t, \frac{4^{t}-1}{4-1}\right), \ t\geq 1$ and $\mathbb{S}_2$ be its complement.
	Let $\mathbb{S}_3$ be the Menon symmetric design with parameters
	$(4^{t+1}, 2^{2t+1}-2^t, 2^{2t}-2^t)$. Denote $\mathbb{S}_4$ the complement of $\mathbb{S}_3$.
	
	Let $\{\mathcal{B}_k\}_{k=1}^{4^{t+1}/2+1}$ be a maximal set of MUBs which exists in $\mathbb{R}^{4^{t+1}}$. By Theorem \ref{thm_construction}, for any partition $\{A_1, A_2, A_3, A_4\}$ of $\INDEX{4^{t+1}/2+1}$, the family 
	$$\mathcal{P}=\cup_{i=1}^4\{P^{\mathcal{B}_k}_{\mathcal{J}}: k\in A_i, \mathcal{J}\in \mathbb{S}_i\}$$ forms a tight optimally spread mixed-rank packing with a mixture of 4 in $\mathbb{R}^{4^{t+1}}$.	
\end{example}

Finally, we will construct optimally spread mixed-rank packings in  $\mathbb{F}^m$ whose corresponding embedded vectors are vertices of an orthoplex in $\mathbb{R}^{d_{\mathbb{F}^m}}$.
\begin{theorem}\label{maximal}
	Let $\mathbb{S}$ be a collection of subsets of $\INDEX{m}$, each of size $l$. Suppose that $|\mathcal{J}\cap \mathcal{J}'|=l^2/m$ for any $\mathcal{J}, \mathcal{J}'\in \mathbb{S}$. Let $\mathbb{S}^c$ be the complement of $\mathbb{S}$, i.e., $$\mathbb{S}^c:=\{\mathcal{J}^c=\INDEX{m}\setminus \mathcal{J}:  \mathcal{J}\in \mathbb{S}\}.$$ Let $\{\mathcal{B}_{k}\}_{k\in K}$ be a set of MUBs for $\mathbb{F}^m$ and let $\mathcal{P}=\{P^{\mathcal{B}_k}_{\mathcal{J}}: k\in K, \mathcal{J}\in \mathbb{S}\cup \mathbb{S}^c\}$. If $2|\mathbb{S}||K|>d_{\mathbb{F}^m}+1$, then $\mathcal{P}$ is an optimally spread mixed-rank packing for which the embedded vectors corresponding to the coordinate projections in $\mathcal{P}$ occupy the vertices of an orthoplex in $\mathbb{R}^{d_{\mathbb{F}^m}}$.
	Furthermore, if $|\mathbb{S}|=m-1$ and a maximal set of MUBs, $\{\mathcal{B}_k\}_{k=1}^{k_{\mathbb{F}^m}}$, exists in $\mathbb{F}^m$, then the collection of projections  $\mathcal{P}=\{P_{\mathcal{J}}^{\mathcal{B}_k}: k\in \INDEX{k_{\mathbb{F}^m}}, \mathcal{J}\in \mathbb{S}\cup \mathbb{S}^c\}$ is a maximal orthoplectic fusion frame for $\mathbb{F}^m$.
	
\end{theorem}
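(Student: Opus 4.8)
The strategy is to reduce the entire statement to the intersection-number computations of Proposition \ref{pro3} and then transcribe them into the geometry of the embedded vectors via Identity \eqref{eqmixed}. First I would fix the rank data: each $P^{\mathcal{B}_k}_{\mathcal{J}}$ with $\mathcal{J}\in\mathbb{S}$ has rank $l$, each $P^{\mathcal{B}_k}_{\mathcal{J}^c}$ with $\mathcal{J}^c\in\mathbb{S}^c$ has rank $m-l$, and $P^{\mathcal{B}_k}_{\mathcal{J}^c}=I_m-P^{\mathcal{B}_k}_{\mathcal{J}}$. I would then evaluate $\tr(PQ)$ for every distinct pair in $\mathcal{P}$, carrying the quantity $\tfrac{l_Pl_Q}{m}$ alongside each value so that \eqref{eqmixed} can be applied on the spot.

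The computation breaks into two regimes. If the two projections come from different bases, Proposition \ref{pro3}(2) gives $\tr(PQ)=\tfrac{l_Pl_Q}{m}$ regardless of the index sets, hence embedded inner product $0$. If they share a basis, Proposition \ref{pro3}(1) turns everything into intersection sizes: two distinct blocks of $\mathbb{S}$ give $|\mathcal{J}\cap\mathcal{J}'|=l^2/m=\tfrac{l_Pl_Q}{m}$; two distinct blocks of $\mathbb{S}^c$ give, by inclusion–exclusion, $|\mathcal{J}^c\cap(\mathcal{J}')^c|=m-2l+l^2/m=(m-l)^2/m=\tfrac{l_Pl_Q}{m}$; a block and a \emph{different} block's complement give $l-l^2/m=l(m-l)/m=\tfrac{l_Pl_Q}{m}$; and in each of these three cases \eqref{eqmixed} returns inner product $0$. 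The single exception is a block paired with its \emph{own} complement, where $\tr(PQ)=|\mathcal{J}\cap\mathcal{J}^c|=0$; plugging $\tr(PQ)=0$, $l_P=l$, $l_Q=m-l$ into \eqref{eqmixed} collapses all the radicals and yields exactly $-1$ (equivalently, this is the relation $v_{I_m-P}=-v_P$ already recorded for spatial complements). Consequently every distinct pair of embedded vectors is orthogonal except for a block and its own complement inside one basis, which are antipodal: this is precisely the vertex configuration of an orthoplex, each vector possessing a single antipode. For optimality, the hypothesis $2|\mathbb{S}||K|>d_{\mathbb{F}^m}+1$ places us in the regime of Theorem \ref{thm2}, where $\max_{i\neq j}\langle v_i,v_j\rangle\geq 0$; since our maximal inner product is $0$, the code meets this bound, so it minimizes the maximal inner product over all unit vectors and, lying inside $\prod\mathcal{K}_{l_i}$, also over that restricted family. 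By Definition \ref{defn1}, $\mathcal{P}$ is optimally spread.

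For the \emph{Furthermore} clause the remaining work is a vertex count. The hypothesis $|\mathcal{J}\cap\mathcal{J}'|=l^2/m>0$ already forces $\mathbb{S}\cap\mathbb{S}^c=\emptyset$, since a block equal to another block's complement would be disjoint from it, contradicting positivity of the common intersection; hence $\mathbb{S}\cup\mathbb{S}^c$ has exactly $2(m-1)$ index sets and $|\mathcal{P}|=2(m-1)\,k_{\mathbb{F}^m}$. Substituting $k_{\mathbb{R}^m}=m/2+1$ with $d_{\mathbb{R}^m}=m(m+1)/2-1$, and $k_{\mathbb{C}^m}=m+1$ with $d_{\mathbb{C}^m}=m^2-1$, gives $2(m-1)k_{\mathbb{F}^m}=2d_{\mathbb{F}^m}$ in both fields. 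Thus $\mathcal{P}$ consists of exactly $2d_{\mathbb{F}^m}$ projections whose embedded vectors split into $d_{\mathbb{F}^m}$ mutually orthogonal antipodal pairs in $\mathbb{R}^{d_{\mathbb{F}^m}}$; the $d_{\mathbb{F}^m}$ positive representatives are pairwise orthogonal unit vectors, hence an orthonormal basis, so the full vertex set of an orthoplex is occupied and $\mathcal{P}$ is a maximal orthoplectic fusion frame (tightness following automatically from the earlier theorem that all such frames are tight).

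I expect the difficulty to be organizational rather than conceptual: the same-basis analysis must cleanly separate the block-versus-own-complement case (the only antipodal case) from the block-versus-other-complement case, and one must confirm that the lone nonzero trace deficit passes through the square roots in \eqref{eqmixed} to land at precisely $-1$. The only genuinely new observation beyond these routine computations is that positivity of the common intersection number $l^2/m$ already guarantees that $\mathbb{S}$ and $\mathbb{S}^c$ are disjoint, which is what makes the count $2(m-1)k_{\mathbb{F}^m}$ — and therefore the identification with a \emph{full} orthoplex — exact.
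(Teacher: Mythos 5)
Your proposal is correct and follows essentially the same route as the paper's proof: compute all pairwise traces via Proposition \ref{pro3}, observe that every trace deficit $\tr(PQ)-\tfrac{l_Pl_Q}{m}$ vanishes except for a block paired with its own complement in the same basis (which gives antipodal embedded vectors), and then invoke Identity \eqref{eqmixed} together with the cardinality hypothesis and the count $2(m-1)k_{\mathbb{F}^m}=2d_{\mathbb{F}^m}$ for the ``furthermore'' clause. Your extra observations --- the explicit $-1$ inner product for antipodal pairs and the disjointness of $\mathbb{S}$ and $\mathbb{S}^c$ forced by $l^2/m>0$ --- are small refinements the paper leaves implicit.
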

\begin{proof}
	Since  $|\mathcal{J}\cap \mathcal{J}'|=l^2/m$, for every $\mathcal{J}, \mathcal{J}'\in \mathbb{S}$, it is easy to see that $|\mathcal{J}\cap \mathcal{J}'|=\frac{(m-l)^2}{m}$, for every $\mathcal{J}, \mathcal{J}'\in \mathbb{S}^c$. 
	
	Moreover, for every $\mathcal{J}\in \mathbb{S}, \mathcal{J}'\in \mathbb{S}^c, \mathcal{J}'\not=\mathcal{J}^c$, we have
	\[|\mathcal{J}\cap \mathcal{J}'|=|\mathcal{J}\setminus (\mathcal{J}')^c|=l-\frac{l^2}{m}.\] 
	Hence, for each $k\in K$, by (1) of Proposition \ref{pro3}, it follows that
	\[\tr(P^{\mathcal{B}_k}_{\mathcal{J}}P^{\mathcal{B}_k}_{\mathcal{J}'})-\dfrac{\rank(P^{\mathcal{B}_k}_{\mathcal{J}})\rank(P^{\mathcal{B}_k}_{\mathcal{J}'})}{m}=0, \mbox{ for every } \mathcal{J}, \mathcal{J}'\in \mathbb{S}\cup\mathbb{S}^c, \mathcal{J}'\not=\mathcal{J}^c.\]
	Note also that for every $\mathcal{J}\in \mathbb{S}$, the corresponding embedded vectors of $P^{\mathcal{B}_k}_{\mathcal{J}}$ and $P^{\mathcal{B}_k}_{\mathcal{J}^c}$ are antipodal points in $\mathbb{R}^{d_{\mathbb{F}^m}}$. Moreover, by (2) of Proposition \ref{pro3}, for any $k, k'\in K, k\not=k'$, we have 
	
	\[\tr(P^{\mathcal{B}_{k}}_{\mathcal{J}}P^{\mathcal{B}_{k'}}_{\mathcal{J}'})-\dfrac{\rank(P^{\mathcal{B}_k}_{\mathcal{J}})\rank(P^{\mathcal{B}_{k'}}_{\mathcal{J}'})}{m}=0, \mbox{ for every } \mathcal{J}, \mathcal{J}'\in \mathbb{S}\cup\mathbb{S}^c.\]
	
	Note that $|\mathbb{S}\cup \mathbb{S}^c|=2|\mathbb{S}|$ and so there are $2|\mathbb{S}|$ coordinate projections respective to each $\mathcal{B}_k, k\in K$. Hence, the total number of projections is $|\mathcal{P}|=2|\mathbb{S}||K|>d_{\mathbb{F}^m}+1$ by assumption.
	The conclusion of the first part of the theorem then follows by using identity \eqref{eqmixed}.
	
	The ``furthermore part'' is obvious since in this case, 
	\[|\mathcal{P}|=2(m-1)k_{\mathbb{F}^m}=2d_{\mathbb{F}^m}.\]
	This completes the proof.
\end{proof}

Recall that for any $(m, l, \lambda)$ symmetric block design, the intersection between any of its blocks has exactly $\lambda$ elements. Note that $\lambda=\frac{l(l-1)}{m-1}<\frac{l^2}{m}$. However, if $\frac{m-l}{l-1}$ is an integer, then we can view these blocks as subsets of a bigger set, $\INDEX{m'}$ so that $\lambda=\frac{l^2}{m'}$. In other words, these blocks, viewed as subsets of $\INDEX{m'}$, satisfy the assumption of Theorem \ref{maximal}. We will record this by the following proposition.

\begin{proposition}
	Let $\mathbb{S}$ be a $(m, l, \lambda)$ symmetric block design. Suppose that $\frac{m-l}{l-1}$ is an integer. Let $m'=m+\frac{m-l}{l-1}$. Then for any $\mathcal{J}, \mathcal{J}'\in \mathbb{S}$, we have $|\mathcal{J}\cap \mathcal{J}'|=\frac{l^2}{m'}$.
\end{proposition}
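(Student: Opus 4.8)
The plan is to reduce the claim to the known intersection number of a symmetric design and then verify a single algebraic identity. First I would note that the statement is intended for distinct blocks $\mathcal{J} \neq \mathcal{J}'$, since for $\mathcal{J}=\mathcal{J}'$ one has $|\mathcal{J}\cap\mathcal{J}'| = l$. By the theorem of \cite{CL} recalled above, any two distinct blocks of a $(m,l,\lambda)$ symmetric design meet in exactly $\lambda$ points, so $|\mathcal{J}\cap\mathcal{J}'| = \lambda$. It therefore suffices to show that $\lambda = l^2/m'$.

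Next I would express $\lambda$ purely in terms of $m$ and $l$. Because the design is symmetric, its replication number satisfies $r = l$; combining this with the standard $2$-design relation $r(l-1) = \lambda(m-1)$ from the parameter proposition recalled above yields $\lambda = \frac{l(l-1)}{m-1}$.

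Then I would simplify $m'$. Clearing denominators in $m' = m + \frac{m-l}{l-1}$ gives $m' = \frac{m(l-1) + (m-l)}{l-1} = \frac{l(m-1)}{l-1}$. Substituting this into $l^2/m'$ produces $\frac{l^2(l-1)}{l(m-1)} = \frac{l(l-1)}{m-1}$, which is precisely the value of $\lambda$ computed in the previous step. Hence $|\mathcal{J}\cap\mathcal{J}'| = \lambda = l^2/m'$, as desired.

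The argument involves no genuine obstacle: the only thing to keep track of is that the hypothesis $\frac{m-l}{l-1}\in\mathbb{Z}$ is exactly what guarantees that $m'$ is an integer, so that viewing the blocks as subsets of $\INDEX{m'}$ is meaningful. The content of the proposition is conceptual rather than computational, namely that $m'$ has been chosen precisely so that the fixed pairwise intersection size $\lambda$ of a symmetric design coincides with $l^2/m'$; this is what allows the blocks of a symmetric design, reinterpreted inside the enlarged index set $\INDEX{m'}$, to satisfy the intersection hypothesis $|\mathcal{J}\cap\mathcal{J}'| = l^2/m'$ of Theorem \ref{maximal}, thereby feeding symmetric designs into the orthoplex construction.
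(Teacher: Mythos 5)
Your proof is correct and follows essentially the same route as the paper, which states the proposition without a formal proof but sketches exactly this argument in the preceding paragraph: distinct blocks of a symmetric design meet in $\lambda=\frac{l(l-1)}{m-1}$ points, and $m'$ is chosen precisely so that $l^2/m'$ equals this value. Your explicit verification (via $r=l$, $r(l-1)=\lambda(m-1)$, and the simplification $m'=\frac{l(m-1)}{l-1}$), together with the remark that the claim is meant for $\mathcal{J}\neq\mathcal{J}'$, fills in the details the paper leaves implicit.
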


\begin{example}{Example}
	Let $q$ be a prime power and let $\mathbb{S}$ be a symmetric block design with parameters
	$$m=q^2+q+1, \ l=q+1,\  \lambda=1.$$ 
	Such designs exist, called Projective planes, see \cite{CD}. We have $\frac{m-l}{l-1}=q$, which is an integer. Let $m'=m+q=(q+1)^2$. If $m'$ is a prime power, then by Theorem \ref{maximal MUBs}, a maximal set of MUBs, $\{\mathcal{B}_k\}_{k=1}^{m'+1}$ exists in $\mathbb{C}^{m'}$. By Theorem \ref{maximal}, the family 
	\[\mathcal{P}=\{P_{\mathcal{J}}^{\mathcal{B}_k}: k\in \INDEX{m'+1}, \mathcal{J}\in \mathbb{S}\cup \mathbb{S}^c\},\] where $\mathbb{S}^c=\{\INDEX{m'}\setminus \mathcal{J}: \mathcal{J}\in \mathbb{S}\}$, is an optimally spread mixed-rank packing for $\mathbb{C}^{m'}$ with a mixture of 2. A similar result is obtained for $\mathbb{R}^{m'}$ when $m'$ is a power of 4. However, in both cases, the projections cannot embed exhaustively into every vertex of the orthoplex since $|\mathcal{P}|=2mk_{\mathbb{F}^{m'}}<2d_{\mathbb{F}^{m'}}.$
	
\end{example}

It is known that an extension of a $(4t-1, 2t-1, t-1)$ Hadamard design is a $(4t, 2t, t-1)$ Hadamard 3-design. This design exists if and only if a Hadamard matrix of order $4t$ exists. Moreover, it can be constructed from a Hadamard matrix as follows. 

Let $H=(h_{ij})$ be a Hadamard matrix of order $4t$. Normalize $H$ so that all elements in the last row are $+1$. For each row $i$ other than the last, we define a pair of subsets of $\INDEX{4t}$ by
\[\mathcal{J}_i=\{j\in \INDEX{4t}: h_{ij}=+1\}, \mbox{ and } \mathcal{J}'_i=\{j\in \INDEX{4t}: h_{ij}=-1\}.\]
Then the collection $\mathbb{S}$ of all these subsets forms a Hadamard 3-design. Note that $\mathcal{J}'_i=\mathcal{J}^c_i$, for all $i$. Hence if we let $\mathbb{S}_1=\{\mathcal{J}_i : i=1, 2, \ldots, 4t-1\}$, then the set of blocks of a Hardmard 3-design has the form $\mathbb{S}=\mathbb{S}_1\cup
\mathbb{S}_1^c$. One of the useful properties of a $(4t, 2t, t-1)$ Hadamard 3-design is that the cardinality of the intersection of any of its two blocks $\mathcal{J}, \mathcal{J}'$, where $\mathcal{J}'\not=\mathcal{J}^c$ is the same, namely, $|\mathcal{J}\cap \mathcal{J}'|=t$. For more properties of these designs, see for example, \cite{CL, CD}.

It turns out that we can use Hadamard 3-designs and maximal sets of mutually unbiased bases to construct maximal orthoplectic fusion frames of constant-rank. Moreover, any such fusion frames constructed in this way must come from Hadamard 3-designs. This does not seem to be mentioned in previous papers \cite{BH1, CHST}. Note that in \cite{BH1}, the authors give a construction of a family of block designs and then use them to construct a family of maximal orthoplectic fusion frames of constant-rank. They also claim that these block designs are 2-designs. However, as a consequence of the following theorem, they are actually Hadamard 3-designs.
\begin{theorem}
	Let $\mathbb{S}$ be a collection of subsets of $\INDEX{m}$, each of size $l$. Let $\{\mathcal{B}_{k}\}_{k\in K}$ be a maximal set of MUBs for $\mathbb{F}^m$, with $|K|=k_{\mathbb{F}^m}$, and let $\mathcal{P}=\{P^{\mathcal{B}_k}_{\mathcal{J}}: k\in K, \mathcal{J}\in \mathbb{S}\}$. If $\mathbb{S}$ is a Hadamard 3-design, then $\mathcal{P}$ is a maximal orthoplectic fusion frame of constant-rank for $\mathbb{F}^m.$ In particular,
	\begin{enumerate}
		\item If $m$ is a power of 2, then $\mathcal{P}$ is a maximal orthoplectic fusion frame for $\mathbb{C}^m$.
		\item If $m$ is a power of $4$, then $\mathcal{P}$ is a maximal orthoplectic fusion frame for $\mathbb{R}^m$.
	\end{enumerate}
	Conversely, if $\mathcal{P}$ is a maximal orthoplectic fusion frame of constant-rank for $\mathbb{F}^m$, then $\mathbb{S}$ is a Hadamard 3-design.
\end{theorem}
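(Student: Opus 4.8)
The plan is to prove both implications by pushing everything through Proposition~\ref{pro3} and the embedding identity~\eqref{eqmixed}, using the standard facts about a $(4t,2t,t-1)$ Hadamard 3-design on $m=4t$ points: it has $l=2t=m/2$, it consists of $2(m-1)$ blocks that are closed under complementation (since its blocks arise in pairs $\mathcal{J}_i,\mathcal{J}_i^c$), and any two distinct non-complementary blocks meet in exactly $t=m/4=l^2/m$ points, while complementary blocks are disjoint.

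For the forward direction I would first count: with $|\mathbb{S}|=2(m-1)$ and $|K|=k_{\mathbb{F}^m}$, substituting $k_{\mathbb{R}^m}=m/2+1$, $k_{\mathbb{C}^m}=m+1$ and the formulas in~\eqref{eq:embdim} gives $|\mathcal{P}|=|K|\,|\mathbb{S}|=2d_{\mathbb{F}^m}$ over both fields. Then I would check the orthoplex structure. Since every block has size $l=m/2$, one has $m-l=l$, so~\eqref{eqmixed} collapses to $\langle v_P,v_Q\rangle=\tfrac{m}{l(m-l)}\bigl(\tr(PQ)-\tfrac{l^2}{m}\bigr)$, and I split into three cases via Proposition~\ref{pro3}: same basis with $\mathcal{J}'=\mathcal{J}^c$ gives $\tr(PQ)=|\mathcal{J}\cap\mathcal{J}^c|=0$, hence inner product $-1$ (antipodal); same basis with $\mathcal{J}'\neq\mathcal{J}^c$ gives $\tr(PQ)=t=l^2/m$, hence orthogonal; and different bases gives $\tr(PQ)=l^2/m$, hence orthogonal. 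Thus the $2d_{\mathbb{F}^m}$ embedded unit vectors occur in $d_{\mathbb{F}^m}$ antipodal pairs with all remaining pairs orthogonal, i.e.\ they form an orthoplex, so $\mathcal{P}$ is a maximal orthoplectic fusion frame of constant rank. Parts (1) and (2) then follow at once: a power of $2$ (resp.\ power of $4$) that is at least $4$ is a prime power (resp.\ a power of $4$), so Theorem~\ref{maximal MUBs} furnishes the maximal set of MUBs, and a Hadamard matrix of that order exists, so the Hadamard 3-design exists.

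For the converse I would read the two defining features of an orthoplex back through the embedding. If $v_P=-v_Q$ for an antipodal pair, then, both projections having the same rank $l$ and hence the same radius $r_l$, the relation $\tfrac1{r_l}(P-\tfrac lm I_m)=-\tfrac1{r_l}(Q-\tfrac lm I_m)$ forces $P+Q=\tfrac{2l}{m}I_m$; examining the spectrum of $Q=\tfrac{2l}{m}I_m-P$ (its eigenvalues are $\tfrac{2l}{m}$ and $\tfrac{2l}{m}-1$) and demanding that $Q$ be a projection pins down $l=m/2$ and $Q=I_m-P$. Hence $m$ is even, every block has size $m/2$, and the antipode of $P^{\mathcal{B}_k}_{\mathcal{J}}$ is $I_m-P^{\mathcal{B}_k}_{\mathcal{J}}=P^{\mathcal{B}_k}_{\mathcal{J}^c}$; a short MUB argument (the cross-basis trace equals $l^2/m\neq0$, so it cannot be $0$) shows the antipode lies in the \emph{same} basis $k$, so $\mathcal{J}^c\in\mathbb{S}$ and $\mathbb{S}$ is closed under complementation. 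The count reruns to give $|\mathbb{S}|=2(m-1)$, and orthogonality of non-antipodal same-basis pairs forces $|\mathcal{J}\cap\mathcal{J}'|=l^2/m=m/4$ for all distinct non-complementary $\mathcal{J},\mathcal{J}'$.

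The decisive step is to recognize these combinatorial constraints as exactly a Hadamard 3-design. I would encode each block as the sign vector $h_{\mathcal{J}}\in\{\pm1\}^m$ with $h_{\mathcal{J}}(j)=+1$ iff $j\in\mathcal{J}$; a direct count yields $\langle h_{\mathcal{J}},h_{\mathcal{J}'}\rangle=4|\mathcal{J}\cap\mathcal{J}'|-m$, so complementary blocks give $-m$ (that is, $h_{\mathcal{J}^c}=-h_{\mathcal{J}}$) and all other distinct pairs give $0$, while $\langle\mathbf 1,h_{\mathcal{J}}\rangle=|\mathcal{J}|-|\mathcal{J}^c|=0$ since $|\mathcal{J}|=m/2$. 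Taking one representative from each of the $m-1$ complementary pairs and adjoining the all-ones vector produces $m$ mutually orthogonal $\pm1$ vectors, hence a Hadamard matrix $H$ of order $m=4t$, whose non-constant rows have precisely the blocks of $\mathbb{S}$ as their $\pm$-support sets; this is exactly the construction of a Hadamard 3-design recalled above, so $\mathbb{S}$ is a Hadamard 3-design. I expect this reconstruction of a genuine Hadamard matrix from the intersection data, together with matching it to the paper's construction, to be the main obstacle, with the spectral antipodality argument forcing $l=m/2$ and $Q=I_m-P$ the other point needing care.
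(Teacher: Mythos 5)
Your proposal is correct and follows essentially the same route as the paper: count $|\mathcal{P}|=2d_{\mathbb{F}^m}$, read the orthoplex structure off Proposition~\ref{pro3} and the embedding identity, and for the converse reconstruct a Hadamard matrix from the $\pm1$ indicator vectors of the blocks using $|\mathcal{J}\cap\mathcal{J}'|=m/4$. The only real difference is that you derive $l=m/2$ and the antipode $Q=I_m-P$ by a direct spectral argument where the paper cites Corollary 2.7 of \cite{BH1}, which just makes the argument more self-contained.
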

\begin{proof}
	Suppose $\mathbb{S}$ is a Hardmard 3-design of parameters $(m, l,\lambda)=(4t, 2t, t-1)$. Then $|\mathbb{S}|=8t-2=2(m-1)$. Note that $l=2t=m/2$, so every projection of $\mathcal{P}$ has the same rank. Moreover, $|\mathcal{J}\cap \mathcal{J}'|=t=l^2/m$, for every distinct blocks $\mathcal{J}, \mathcal{J}'\in \mathbb{S}, \mathcal{J}'\not=\mathcal{J}^c$. Hence, $$	\mu\PARENTH{\mathcal P}  = \max\{\tr(P^{\mathcal{B}_k}_{\mathcal{J}}P^{\mathcal{B}_{k'}}_{\mathcal{J}'}): \mathcal{J}, \mathcal{J}'\in \mathbb{S}, \mathcal{J} \neq \mathcal{J}', k, k'\in K \}=l^2/m.$$ The conclusions then follow from Theorem \ref{thm1}.
	
	Conversely, suppose $\mathcal{P}$ is a maximal orthoplectic fusion frame of constant-rank for $\mathbb{F}^m$. Then each projection of $\mathcal{P}$ must have rank $l=m/2$, (see Corollary 2.7 in \cite{BH1}), and hence every $\mathcal{J}\in \mathbb{S}$ has size $m/2$. Moreover, $\mathbb{S}$  is of size $2d_{\mathbb{F}^m}/k_{\mathbb{F}^m}=2(m-1)$. Since $\mathcal{P}$ is maximal, it follows from \eqref{eq:const} that for each $\mathcal{J}\in \mathbb{S}$, its complement $\mathcal{J}^c$ is also in $\mathbb{S}$. Thus, $\mathbb{S}$ is of the form $\mathbb{S}=\mathbb{S}_1\cup \mathbb{S}_1^c$. Furthermore, by \eqref{eq:const}, $\tr(P^{\mathcal{B}_k}_{\mathcal{J}}P^{\mathcal{B}_{k'}}_{\mathcal{J}'})=l^2/m=m/4$, for any $k, k'\in K$, and $\mathcal{J}, \mathcal{J}'\in \mathbb{S}, \mathcal{J}'\not=\mathcal{J}^c.$ 
	
	Denote the subsets of $\mathbb{S}_1$ by $\mathcal{J}_1, \mathcal{J}_2, \ldots, \mathcal{J}_{m-1}$. Let $H$ be a $m\times m$-matrix whose the last row is of all +1, and the entries of row $i$ is defined by $H_{ij}=1$ if $j\in \mathcal{J}_i$, and $H_{ij}=-1$, otherwise. We will show that $H$ is a Hadamard matrix and therefore $\mathbb{S}$ is a Hadamard 3-design. By the construction, each row other than the last has precisely $m/2$ entries +1 and $m/2$ entries -1. Therefore, it is enough to show that the inner product of any two of them is zero. 
	
	Indeed, take any two distinct rows $R_i$ and $R_{i'}$ of $H$, $i, i'\leq m-1$. Since $$|\mathcal{J}_i\cap \mathcal{J}_{i'}|=\tr(P^{\mathcal{B}_k}_{\mathcal{J}_i}P^{\mathcal{B}_{k}}_{\mathcal{J}_{i'}})=m/4, \mbox{ for all } k \in K,$$ it follows that row $R_i$ and row $R_{i'}$ have exactly $m/4$ entries of +1 in the same column. This implies $\langle R_i, R_{i'}\rangle=0$, which completes the proof. 
\end{proof}
{\bf Acknowledgement:}
	The authors were supported by NSF DMS 1609760, 1906725, and NSF ATD 1321779.

\end{document}